\documentclass[11pt, a4paper]{article}
\usepackage{babel}
\usepackage{amsmath,amssymb,latexsym,color,enumerate,amsthm}
\usepackage{indentfirst}
\usepackage{authblk}
\usepackage[mathscr]{eucal}
\usepackage[colorlinks,
            linkcolor=blue,
            anchorcolor=green,
            citecolor=magenta
           ]{hyperref}
\usepackage{setspace}
\newtheorem{theorem}{Theorem}[section]

\newtheorem{lemma}[theorem]{Lemma}
\newtheorem{cor}[theorem]{Corollary}

\newtheorem{conj}{Conjecture}

\newtheorem{rem}[theorem]{Remark}

\newtheorem{example}{Example}[section]
\newtheorem{defin}[theorem]{Definition}
\newtheorem{pro}[theorem]{Problem}

\usepackage{CJK}

\begin{document}

\renewcommand{\baselinestretch}{1.0}

\title{\bf On degree bounds of $k$-uniform hypergraphs with bounded matching number}
\author[1]{Haixiang Zhang\thanks{E-mail: \texttt{zhang-hx22@mails.tsinghua.edu.cn}}}
\author[2]{Mengyu Cao\thanks{Corresponding author. E-mail: \texttt{myucao@ruc.edu.cn}}}
\author[1]{Mei Lu\thanks{E-mail: \texttt{lumei@tsinghua.edu.cn}}}

\affil[1]{\small Department of Mathematical Sciences, Tsinghua University, Beijing 100084, China}
\affil[2]{\small Institute for Mathematical Sciences, Renmin University of China, Beijing 100086, China}
\date{}
\maketitle
\begin{abstract}
We study the connection between the degree sequence of a $k$-uniform hypergraph and the size of its largest matching. Let $\mathcal{F}$ be a $k$-uniform hypergraph on $n$ vertices and let $d_1(\mathcal{F})\ge d_2(\mathcal{F})\ge\cdots\ge d_n(\mathcal{F})$ be the vertex degrees arranged in non-increasing order. For integers $k\ge 2$, $s\ge 2$ and $n > 2sk$, we prove that if the $2sk$-th largest degree satisfies $d_{2sk} > \binom{n-1}{k-1} - \binom{n-s}{k-1},$ then $\mathcal{F}$ contains a matching of size at least $s$. This can be viewed as a generalization of theorems in \cite{GuoLuJiang2023} and \cite{huang2026ell}. Moreover, by relaxing the range of $n$, we obtain the same bound for the $(k+2s-2)$-th largest degree vertex. Note that the number $k+2s-2$ is optimal.

For a $k$-set of vertices $S \subseteq [n]$, the degree of $S$ is defined as $\deg(S) = \sum_{v \in S} \deg(v)$, and the minimum of $\deg(S)$ over all non-edge $k$-subsets $S \notin E(\mathcal{F})$ of $V(\mathcal{F})$ is the \textit{Ore-degree} of $\mathcal{F}$, denoted by $\sigma_k(\mathcal{F})$. Balogh, Palmer and Raeisi \cite{balogh2026} proved: for $s \ge 2$ and $n \ge 3k^2(s-1)$, if $\sigma_k(\mathcal{F}) > k\left(\binom{n-1}{k-1} - \binom{n-s}{k-1}\right),$ then $\mathcal{F}$ contains a matching of size $s$. They also conjectured that the result holds when $n > sk$. As a corollary, we prove that the bound on $n$ can be taken to be linear in $sk$ ($ n \geq 3sk $).
    \medskip


\end{abstract}
\section{Introduction}
For a $k$-uniform hypergraph $\mathcal{F}$ on vertex set $[n]=\{1,\dots,n\}$, we denote by $d_i(\mathcal{F})$ the degree of vertex $i$, i.e. the number of edges containing $i$. Throughout we order the degrees so that $d_1(\mathcal{F})\ge d_2(\mathcal{F})\ge\cdots\ge d_n(\mathcal{F})$. The \emph{matching number} $\nu(\mathcal{F})$ is the size of a largest collection of pairwise disjoint edges in $\mathcal{F}$.

Let $n$ and $k$ be positive integers with $n \geq k$. Denote by $\binom{[n]}{k}$ the family of all $k$-subsets of $[n].$ For any positive integer $t$, a family $\mathcal{F}\subseteq \binom{[n]}{k}$ is said to be \emph{$t$-intersecting} if $|A \cap B|\geq t$ for all $A, B\in\mathcal{F}.$  For convenience, we will simply refer to a $1$-intersecting family as an \emph{intersecting family}.
A fundamental result in extremal hypergraph theory is the Erd\H{o}s-Ko-Rado theorem (EKR theorem for short) \cite{erdos1961intersection}, which characterises the largest intersecting families of $k$-subsets. More precisely, it states that for integers $n \ge k \ge 1$, if $n \ge 2k$ then any family $\mathcal{F}$ of $k$-element subsets of an $n$-element set with the property that $A \cap B \neq \emptyset$ for all $A,B \in \mathcal{F}$ satisfies $|\mathcal{F}| \le \binom{n-1}{k-1}$, and equality holds if and only if $\mathcal{F}$ consists of all $k$-sets containing a fixed element (when $n > 2k$); for $n = 2k$, the family of all $k$-sets complementary to a fixed $k$-set also attains the bound.

The EKR theorem is widely regarded as the cornerstone of extremal set  theory and has many interesting applications and generalizations. For such families, Huang and Zhao \cite{huang2017degree} proved a minimum degree version as follows.

\begin{theorem}{\rm \cite{huang2017degree}}\label{huang1}
Suppose $n \geq 2k+1$ and $\mathcal{F} \subset \binom{[n]}{k}$ is an intersecting family. Then
$$
d_n(\mathcal{F}) \le \binom{n-2}{k-2}.
$$
\end{theorem}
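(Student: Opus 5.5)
The plan is to argue spectrally on the Kneser graph, which is the route Huang and Zhao take. Write $N=\binom{n}{k}$, let $f=\mathbf 1_{\mathcal F}\in\mathbb R^{\binom{[n]}{k}}$ be the characteristic vector, and let $A$ be the adjacency matrix of the Kneser graph $KG(n,k)$, where two $k$-sets are adjacent iff they are disjoint. Since $\mathcal F$ is intersecting, $f^{T}Af=0$.

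The eigenspaces of $A$ are the standard ones, $V_0\oplus V_1\oplus\cdots\oplus V_k$. Here $V_0$ is the constants and $V_0\oplus V_1$ is spanned by the vertex indicators $x_i(S)=\mathbf 1[i\in S]$. The eigenvalue on $V_j$ is $\lambda_j=(-1)^j\binom{n-k-j}{k-j}$. Let $W$ be the $n\times N$ vertex--set inclusion matrix. Its row space is exactly $V_0\oplus V_1$, and $Wf=(d_1,\dots,d_n)^T$. Thus the degree information is precisely the projection of $f$ onto $V_0\oplus V_1$.

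Suppose, for contradiction, that $d_i\ge \delta:=\binom{n-2}{k-2}+1$ for every $i$. I would build an auxiliary symmetric matrix
$$M=A+\alpha\,W^{T}W+\beta\,J+\gamma I,$$
with $J$ the all-ones matrix, and evaluate $f^TMf$ in two ways.

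\emph{Combinatorially.} We have $f^TAf=0$ and $f^TW^TWf=\sum_i d_i^2$. Also $f^TJf=|\mathcal F|^2$ and $f^TIf=|\mathcal F|$. The identity $\sum_i d_i=k|\mathcal F|$ ties these to the degrees, so $\sum_i d_i^2\ge \delta\sum_i d_i=\delta k|\mathcal F|$ is available whenever $\alpha\ge 0$.

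\emph{Spectrally.} Both $W^TW$ and $J$ act only on $V_0\oplus V_1$, as scalars on each of $V_0$ and $V_1$. Hence $M$ is still diagonal in the same eigenbasis, and I would choose $\alpha,\beta$ so that $M$ is negative semidefinite on $V_0\oplus V_1$. I would take $\gamma=-\lambda_2$ or $\gamma=-\min_{j\ge2}\lambda_j$, as appropriate. The point of $n\ge 2k+1$ is that $|\lambda_j|$ is decreasing in $j$, so $\lambda_3=-\binom{n-k-3}{k-3}$ has much smaller absolute value than $\lambda_1=-\binom{n-k-1}{k-1}$.

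This gives an upper bound $f^TMf\le c\,|\mathcal F|$, to be compared with the combinatorial lower bound. Once the constants are tuned, the comparison should read $\delta\le\binom{n-2}{k-2}$, a contradiction. Tightness is witnessed by the star, whose minimum degree is exactly $\binom{n-2}{k-2}$, so the inequalities must all be equalities on the star. That requirement pins the constants down.

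The main obstacle is choosing $\alpha,\beta,\gamma$ so that two things hold at once: $M\preceq$ (a small multiple of $I$) on the high eigenspaces, and the quadratic term $\sum d_i^2$ appears with the right sign. A naive choice bounds $|\mathcal F|$ (recovering EKR) rather than the minimum degree. I would first try forcing $M$ to vanish on $V_1$, which fixes $\alpha$ through $\lambda_1$ and the $V_1$-eigenvalue $\binom{n-2}{k-1}$ of $W^TW$, and then check the sign of $\lambda_j+\gamma$ for $j\ge 2$.

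If this does not close, I would fall back to a combinatorial argument. Fix a vertex $x$ and an edge $B\not\ni x$, which exists since $\mathcal F$ is not a star when $\delta>\binom{n-2}{k-2}$. Every edge through $x$ meets $B$, which bounds $d_x$ by roughly $k\binom{n-2}{k-2}$. To sharpen this, I would average over the choice of $x$ and use Katona's cycle method to reach the exact bound.
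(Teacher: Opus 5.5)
The paper gives no proof of this statement (it is quoted from Huang--Zhao), so your proposal has to stand on its own. It has a genuine gap: the decisive step ``once the constants are tuned'' is never carried out, and with $\alpha\ge 0$ it cannot be.

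The only place the hypothesis $d_i\ge\delta$ enters is $\sum_i d_i^2\ge\delta k|\mathcal F|$. Since $\delta\le\bar d:=k|\mathcal F|/n$, this is weaker than Cauchy--Schwarz, $\sum_i d_i^2\ge k^2|\mathcal F|^2/n$, which every family satisfies. Your spectral upper bound $U(|\mathcal F|)$ on $f^TMf$ depends only on $|\mathcal F|$ and on $f^TAf=0$, so it also holds for the full star. Hence the inequality $\alpha\delta k|\mathcal F|+\beta|\mathcal F|^2+\gamma|\mathcal F|\le U(|\mathcal F|)$ is satisfied by $|\mathcal F|=\binom{n-1}{k-1}$ together with every $\delta\le k\binom{n-1}{k-1}/n=\binom{n-2}{k-2}+\binom{n-2}{k-1}/n$. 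You only recover the trivial bound $\delta\le\bar d$.

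Your tightness heuristic fails for the same reason. On the star the centre has degree $\binom{n-1}{k-1}$, and $\sum_i d_i^2-\binom{n-2}{k-2}\sum_i d_i=\binom{n-1}{k-1}\binom{n-2}{k-1}>0$, so your chain is never tight there. Structurally, forcing $M$ to vanish on $V_1$ with $\gamma=-\lambda_2$ yields an \emph{upper} bound on the $V_1$-energy (the degree variance), and you pair it with a \emph{lower} bound on the same quantity. The Katona fallback is too unspecific to assess, and the one-edge bound $d_x\le k\binom{n-2}{k-2}$ is off by a factor of $k$.

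What works is the opposite orientation.
\begin{enumerate}[(1)]
\item Write $f=u_0+u_1+u_{\ge 2}$, so that $\|u_0\|^2=|\mathcal F|^2/\binom nk$ and $\|u_1\|^2=\binom{n-2}{k-1}^{-1}\sum_i(d_i-\bar d)^2$.
\item For $n\ge 2k+1$ the most negative eigenvalue on $V_{\ge2}$ is $\lambda_3=-\mu$ with $\mu=\binom{n-k-3}{k-3}$ (take $\mu=0$ if $k=2$). This eigenvalue, not $\lambda_2$, is the one that matters.
\item Then $0=f^TAf$ gives the \emph{lower} bound $(|\lambda_1|-\mu)\|u_1\|^2\ge(\lambda_0+\mu)\|u_0\|^2-\mu|\mathcal F|$.
\item The degree hypothesis must now bound the variance from \emph{above}, and for that you also need the maximum degree. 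The inequality $\sum_i(d_i-\delta)(|\mathcal F|-d_i)\ge 0$ gives $\sum_i(d_i-\bar d)^2\le n(\bar d-\delta)(|\mathcal F|-\bar d)$, which is tight on the star.
\item Combining the two yields $\delta\le g(|\mathcal F|)$ with $g$ affine. For $n\ge 2k+1$, $g$ is nondecreasing; this reduces to $(k-1)\binom{n-k-1}{k-1}\ge k\binom{n-k-3}{k-3}$. Moreover $g\bigl(\binom{n-1}{k-1}\bigr)=\binom{n-2}{k-2}$ exactly.
\item The EKR bound $|\mathcal F|\le\binom{n-1}{k-1}$ finishes the proof.
\end{enumerate}
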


Frankl and Wang \cite{frankl2025largest} conjectured that the same bound actually controls the $(2k+1)$-th largest degree, for any $n\ge 2k+1$,
i.e., $d_{2k+1}(\mathcal{F})\le\binom{n-2}{k-2}$, and they proved that for $n>6k-9$. The conjecture was confirmed by Huang and Rao \cite{huang2026ell}.

\begin{theorem}{\rm \cite{huang2026ell}}\label{huang2}
Suppose $n \geq 2k+1$, $\mathcal{F} \subset \binom{[n]}{k}$ is an intersecting family and degree sequence $d_1(\mathcal{F})\ge d_2(\mathcal{F})\ge\cdots\ge d_n(\mathcal{F})$.  Then
$$
d_{2k+1}(\mathcal{F}) \le \binom{n-2}{k-2}.
$$
\end{theorem}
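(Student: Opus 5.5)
The plan is to argue by contradiction with a cross-intersecting decomposition of degrees. Suppose $\mathcal{F}$ is intersecting, $n\ge 2k+1$, and $d_{2k+1}(\mathcal{F})>\binom{n-2}{k-2}$. Then there is a set $T$ of $2k+1$ vertices, each of degree greater than $\binom{n-2}{k-2}$.

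For distinct $v,w\in T$, split the edges through $v$ according to whether they contain $w$:
$$d(v)=d(v,w)+|\mathcal{F}(v\bar w)|,$$
where $d(v,w)$ counts edges containing both $v$ and $w$, and $\mathcal{F}(v\bar w)=\{E\setminus\{v\}: v\in E\in\mathcal{F},\ w\notin E\}$, a $(k-1)$-uniform family on $[n]\setminus\{v,w\}$. Since $d(v,w)\le\binom{n-2}{k-2}<d(v)$, the family $\mathcal{F}(v\bar w)$ is nonempty for every ordered pair $(v,w)$ in $T$. Because $\mathcal{F}$ is intersecting, $\mathcal{F}(v\bar w)$ and $\mathcal{F}(w\bar v)$ are cross-intersecting $(k-1)$-uniform families on $m=n-2\ge 2k-1=2(k-1)+1$ points.

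The quantitative input is a cross-intersecting inequality on this ground set. I would aim to prove: if $\mathcal{G},\mathcal{H}\subseteq\binom{[m]}{k-1}$ are nonempty and cross-intersecting with $m\ge 2k-1$, then
$$|\mathcal{G}|\le \binom{m}{k-1}-\binom{m-k+1}{k-1}$$
whenever $\mathcal{H}\neq\emptyset$. This holds because every member of $\mathcal{G}$ must meet a fixed $H\in\mathcal{H}$. A sharper, Hilton--Milner-type version would control $|\mathcal{G}|+|\mathcal{H}|$ or $\min(|\mathcal{G}|,|\mathcal{H}|)$. Combined with $d(v)>\binom{n-2}{k-2}$, this forces each $\mathcal{F}(v\bar w)$ to be large, roughly larger than $\binom{n-2}{k-2}-d(v,w)$.

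Next I would exploit the size of $T$. Fix an edge $A\in\mathcal{F}$. Then $|T\setminus A|\ge k+1$, and every edge meets $A$. For $u\in T\setminus A$, every edge through $u$ meets $A$, so
$$d(u)\le\sum_{a\in A} d(u,a).$$
I would refine this by choosing $A$ cleverly, for instance as an edge minimizing $|A\cap T|$, or through iterated choices of edges in the $\mathcal{F}(v\bar w)$. The aim is to find $k+1$ vertices of $T$ whose large degrees must be absorbed by pairs with the $k$ elements of $A$. Double counting $\sum_{u\in T\setminus A}\sum_{a\in A}d(u,a)$ against the codegree constraints should then give
$$\sum_{u\in T\setminus A} d(u)\le (k+1)\binom{n-2}{k-2}.$$
The codegree constraints come from the intersecting property: for distinct $a,a'\in A$, the links of the pairs $\{u,a\}$ and $\{u',a'\}$ are cross-intersecting $(k-2)$-families once the other vertices are removed. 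This inequality contradicts $d(u)>\binom{n-2}{k-2}$ for all $u\in T$. The count $2k+1=k+(k+1)$ is exactly what makes this pigeonhole work, matching the known extremal examples, such as the triangle-type family when $k=2$.

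I expect the main obstacle to be this sharp double-counting step. The naive bound $d(u)\le\sum_{a}d(u,a)$ loses a factor of roughly $k$. Recovering the exact threshold $\binom{n-2}{k-2}$ down to $n=2k+1$ will need the cross-intersecting bounds in their sharp form, for instance via Katona's cycle method or a Kneser-graph eigenvalue bound as in Huang--Zhao's proof of Theorem~\ref{huang1}, applied simultaneously to the pairs $(u,a)$. I would first try to prove the $(k+1)$-vertex inequality by a weighted version of Huang--Zhao's spectral argument, using the vector $\sum_{u\in T\setminus A}\mathbf{1}_{\{E\ni u\}}$.
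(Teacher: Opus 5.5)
There is a genuine gap. The whole content of the theorem sits in the inequality $\sum_{u\in T\setminus A} d(u)\le (k+1)\binom{n-2}{k-2}$, and your proposal never derives it.

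You note yourself that $d(u)\le\sum_{a\in A}d(u,a)$ loses a factor of about $k$. The cross-intersecting step adds nothing. The bound $|\mathcal{F}(v\bar w)|>\binom{n-2}{k-2}-d(v,w)$ is just the identity $d(v)=d(v,w)+|\mathcal{F}(v\bar w)|$ rewritten, and $\binom{m}{k-1}-\binom{m-k+1}{k-1}$ lies far above the threshold.

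More seriously, the mechanism you describe cannot work as stated. Double counting against codegree constraints that come only from the intersecting property, with $A$ chosen to minimize $|A\cap T|$, would have to prove an inequality that is false for general intersecting families. Take $k=3$, $n=7$, $\mathcal{H}=\{F\ni 4: F\cap\{1,2,3\}\neq\emptyset\}\cup\{\{1,2,3\}\}$ and $A=\{1,2,3\}$. Here $T=[7]$, so every edge minimizes $|A\cap T|$. The four vertices outside $A$ have degrees $12,3,3,3$, summing to $21>20=(k+1)\binom{n-2}{k-2}$.

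So any valid argument must use the hypothesis $d(u)>\binom{n-2}{k-2}$ for all $u\in T$ in an essential way, and your sketch does not say how. The same applies to the spectral idea. Huang--Zhao's Kneser-graph argument uses the degree condition at every vertex, and you give no indication of how to run it when only $2k+1$ of the $n$ vertices are controlled. Your pigeonhole heuristic for the index is also off: for $k=2$ the triangle only shows that $d_3$ can exceed $1$, and $d_4\le 1$ already holds.

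The missing idea is a reduction to a ground set of exactly $2k+1$ points. This is the Huang--Rao argument that the paper generalizes in Section~\ref{2sk}; the paper itself only cites the statement.
\begin{enumerate}[(1)]
\item Relabel the top $2k+1$ vertices as $[2k+1]$ and apply all shifts $\mathcal{S}_{i,j}$ with $i\le 2k+1<j$. This preserves the intersecting property, does not decrease any degree in $[2k+1]$, and makes the family $(2k+1)$-shifted.
\item By Lemma~\ref{ell-shifted} with $s=2$ and $\ell=2k+1\ge 2k$, the traces $\mathcal{G}=\{F\cap[2k+1]:F\in\mathcal{F}\}$ form an upward-closed intersecting family with $\emptyset\notin\mathcal{G}$.
\item Run the argument of Lemma~\ref{main:lem} on the ground set $[2k+1]$, using Theorem~\ref{huang1} as the base case (its hypothesis $n\ge 2k+1$ is met exactly) and Kruskal--Katona to descend through the lower layers. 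This gives a single vertex $i$ with $d_i^{(m)}(\mathcal{G})\le\binom{2k-1}{m-2}$ for all $m\le k$.
\item Vandermonde's identity then gives $d_i(\mathcal{F})\le\sum_{m}\binom{2k-1}{m-2}\binom{n-2k-1}{k-m}=\binom{n-2}{k-2}$.
\end{enumerate}
The index $2k+1$ is thus the threshold of the minimum-degree theorem transported to $[2k+1]$, not the result of a $k+(k+1)$ pigeonhole around a fixed edge.
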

\begin{theorem}{\rm \cite{frankl2025largest}}\label{s=2}\label{frankl1}
Suppose $n \geq 6k^2$, $\mathcal{F} \subset \binom{[n]}{k}$ is an intersecting family and degree sequence $d_1(\mathcal{F})\ge d_2(\mathcal{F})\ge\cdots\ge d_n(\mathcal{F})$. Then
$$
d_{k+2}(\mathcal{F}) \le \binom{n-2}{k-2}.
$$
\end{theorem}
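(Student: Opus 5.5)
We argue by contradiction. Suppose $d_{k+2}(\mathcal{F})>\binom{n-2}{k-2}$, and let $T$ be a set of $k+2$ vertices, each of degree greater than $\binom{n-2}{k-2}$. We split according to the covering number $\tau(\mathcal{F})$, the least size of a vertex set meeting every edge.

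\textbf{Case $\tau(\mathcal{F})=1$.} Every edge contains a fixed vertex $v$. For any $u\neq v$, each edge through $u$ also contains $v$, so $\deg(u)\le\binom{n-2}{k-2}$. Hence $d_2(\mathcal{F})\le\binom{n-2}{k-2}$, which contradicts the assumption since $k+2\ge 2$. So we may assume $\tau(\mathcal{F})\ge 2$.

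\textbf{Case $\tau(\mathcal{F})\ge 3$.} Fix $x\in T$. Since $\tau\ge 2$, some edge $G$ avoids $x$, and every edge through $x$ meets $G$. For $y\in G$, the pair $\{x,y\}$ is not a cover, so some edge $H$ avoids both $x$ and $y$. Every edge through $\{x,y\}$ meets $H$, which gives
\[
\deg(x,y)\le k\binom{n-3}{k-3}.
\]
Summing over $y\in G$ yields $\deg(x)\le k^2\binom{n-3}{k-3}$. Unfortunately this only beats $\binom{n-2}{k-2}$ when $n-2>k^2(k-2)$, a cubic range. To reach $n\ge 6k^2$ I would refine the count in two ways:
\begin{enumerate}[(i)]
\item For each $y$, count only the edges through $x$ whose first vertex in $G$, in a fixed ordering of $G$, is $y$.
\item Exploit the freedom in choosing $H$ for each pair $\{x,y\}$.
\end{enumerate}
Alternatively, one could apply a Hilton--Milner-type bound to the link $\mathcal{F}(x)$. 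This link is a $(k-1)$-uniform family that is cross-intersecting with the nonempty family $\{F\in\mathcal{F}: x\notin F\}$, and the latter itself has no cover of size $1$ avoiding $x$. I expect such a bound to give roughly $\deg(x)\le (k-1)\binom{n-3}{k-3}+O\bigl(k^2\binom{n-4}{k-4}\bigr)$, which is below $\binom{n-2}{k-2}$ once $n\gtrsim 6k^2$.

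\textbf{Case $\tau(\mathcal{F})=2$.} Let $\mathcal{C}$ be the set of $2$-element covers. The key step is to show that a vertex $x$ of degree greater than $\binom{n-2}{k-2}$ must lie in some cover in $\mathcal{C}$. The reason is that if $x$ lies in no $2$-cover, the counting of the previous case applies to $x$ and gives $\deg(x)\le\binom{n-2}{k-2}$. So $T\subseteq\bigcup\mathcal{C}$.

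Next I would analyse the structure of $\mathcal{C}$. The $2$-covers pairwise intersect or span edges in a constrained way, so $\bigcup\mathcal{C}$ is either a star of pairs $\{v,y_i\}$, a triangle, or contained in a single edge together with its covers.
\begin{itemize}
\item In the star configuration, each vertex $y_i$ has degree at most about $\binom{n-2}{k-2}$, because its edges through $v$ and away from $v$ are both restricted. Carrying out this estimate, I expect to show that at most $k+1$ vertices can exceed the threshold.
\item The triangle configuration has only three candidate vertices, so it is immediately too small for $|T|=k+2$.
\end{itemize}

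\textbf{Main obstacle.} The hardest step is the sharp degree estimate for a vertex lying in no $2$-cover, under the quadratic hypothesis $n\ge 6k^2$ rather than a cubic one. The naive $k^2\binom{n-3}{k-3}$ bound is too weak, so the argument needs the careful inclusion--exclusion or Hilton--Milner refinement described in the case $\tau(\mathcal{F})\ge 3$.
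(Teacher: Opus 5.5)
\textbf{The paper does not prove this statement.} It is quoted from Frankl--Wang and used only as the base case $s=2$ of Theorem~\ref{thm:main2}. Judged on its own, your proposal has a genuine gap. The problem is not only that the case $\tau(\mathcal{F})\ge 3$ is unfinished. The strategy you propose there is to show that each individual $x\in T$ has $\deg(x)\le\binom{n-2}{k-2}$, by refining the count or by a Hilton--Milner bound on the link. That claim is false for $n$ of order $k^2$.

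\textbf{A counterexample to the per-vertex claim.} Let $D$ be a $(k-1)$-set, $a,b,x$ three further vertices, and $E$ a $(k-2)$-set disjoint from all of these. Put $A=D\cup\{a\}$, $B=D\cup\{b\}$, $C=E\cup\{a,b\}$. Let $\mathcal{F}$ consist of $A,B,C$ together with all $k$-sets $\{x\}\cup T$ with $x\notin T$ and $T$ meeting each of $A,B,C$.
\begin{itemize}
\item $\mathcal{F}$ is intersecting.
\item For $k\ge 3$ we have $\tau(\mathcal{F})=3$. A pair containing $x$ misses one of $A,B,C$, since $A\cap B\cap C=\emptyset$. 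A pair $S\not\ni x$ that meets $C$ avoids some $d\in D$ and some $c\in C$, hence misses an edge $\{x,d,c,\dots\}$.
\item $\deg(x)$ is the number of $(k-1)$-sets in $[n]\setminus\{x\}$ meeting $A$, $B$ and $C$. Exactly $k(k-1)+1$ two-element sets do this, so for fixed $k$ we get $\deg(x)=(k^2-k+1+o(1))\binom{n-3}{k-3}$ as $n\to\infty$.
\end{itemize}
So your naive bound $k^2\binom{n-3}{k-3}$ is essentially sharp, and your expected bound $(k-1)\binom{n-3}{k-3}+O\bigl(k^2\binom{n-4}{k-4}\bigr)$ is false.

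At $n=6k^2$, the $(k-1)$-sets meeting both $D$ and $C$ make up asymptotically a $(1-e^{-1/6})^2\approx 0.024$ fraction of $\binom{n-1}{k-1}$. By contrast, $\binom{n-2}{k-2}=\frac{k-1}{n-1}\binom{n-1}{k-1}\approx\frac{1}{6k}\binom{n-1}{k-1}$. Hence $\deg(x)>\binom{n-2}{k-2}$ once $k$ is moderately large; for $k=10$, $n=600$ the two fractions are roughly $0.0164$ and $0.0150$. Neither refinement (i), (ii) nor a link bound can therefore close this case.

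\textbf{What is missing.} The theorem survives in this example only because $x$ is the unique heavy vertex. You need an argument for $\tau\ge3$ that uses the $k+2$ heavy vertices jointly, rather than bounding each one separately.

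\textbf{The case $\tau=2$.} The reduction ``$x$ lies in no $2$-cover, hence $x$ is light'' cannot rest on the failed $\tau\ge 3$ counting, but it has a direct proof. Take a $2$-cover $\{y,z\}$ with $x\notin\{y,z\}$. Neither $\{x,y\}$ nor $\{x,z\}$ is a cover, so
\[
\deg(x)\le\deg(x,y)+\deg(x,z)\le 2k\binom{n-3}{k-3}\le\binom{n-2}{k-2}\quad\text{whenever } n-2\ge 2k(k-2).
\]
The rest of this case is still only asserted:
\begin{itemize}
\item The trichotomy for $\bigcup\mathcal{C}$ is unproved and, as stated, incomplete. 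For example, take all $k$-sets through $\{p,r\}$, one edge through $\{p,t\}$ avoiding $r$, and one edge through $\{q,r\}$ avoiding $p$, the two meeting outside $\{p,q,r,t\}$. This is intersecting with $\tau=2$, and $\{p,q\}$, $\{r,t\}$ are disjoint $2$-covers.
\item The bound of at most $k+1$ heavy vertices in the star case is only ``expected''. It must be proved exactly, because the Hilton--Milner family (the sharpness example after Theorem~\ref{thm:main2} with $s=2$) has exactly $k+1$ heavy vertices.
\end{itemize}
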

Huang and Rao \cite{huang2026ell} proved a linear bound $n>\frac{11}{2}k$ for $k>50$.

A natural generalization of the EKR theorem is to relax the intersecting condition (which corresponds to a matching number of $1$) to families with bounded matching number $\nu(\mathcal{F}) \le s$. This leads to the Erd\H{o}s matching conjecture, which predicts that the largest $k$-uniform family on $[n]$ with matching number at most $s$. The conjecture thus extends the classical EKR theorem from the case $s=1$ to arbitrary $s$ and remains one of the central open problems in extremal set theory.

\begin{conj}[Erd\H{o}s matching conjecture {\rm \cite{erdos1965problem}}]
	Let $\mathcal{F}\subseteq \binom{[n]}{k}$ with $\nu(\mathcal{F})=s$, $n\geq sk+k-1$. Then $|\mathcal{F}|\leq \max\{|\mathcal{A}_1|,|\mathcal{A}_k|\} = \max\left\{\binom{n}{k}-\binom{n-s}{k}, \binom{sk+k-1}{k}\right\}$, where $\mathcal{A}_i=\{F\in \binom{[n]}{k}:|F\cap [is+i-1]|\geq i\}$ for $1\leq i \leq k$ and $\nu(\mathcal{A}_i)=s$.
\end{conj}

When $n \geq (s+1)(k+1)$, we can easily show that $|\mathcal{A}_1| > |\mathcal{A}_k|$. Thus, part of the conjecture involves proving that $|\mathcal{F}| \leq |\mathcal{A}_1|$ when $n > n_0$. The bounds for $n_0$ have been progressively improved. Specifically, Bollob\'{a}s, Daykin, and Erd\H{o}s \cite{bollobas1976sets} initially established a bound, which was subsequently refined by Huang, Loh, and Sudakov \cite{huang2012size} to $n_0 = 3k^2s$, and further improved by Frankl, {\L}uczak, and Mieczkowska \cite{frankl2012matchings} to $n_0 = 3k^2s/(2\log k)$. Later, Frankl \cite{frankl2013improved} achieved $n_0 = (2s+1)k - s$, and Frankl and Kupavskii \cite{frankl2022} ultimately reduced it to $n_0 = \frac{5}{3}sk - \frac{2}{3}s$.

Bollobás, Daykin, and Erd\H{o}s \cite{bollobas1976sets} also investigated a minimum degree version of the conjecture. They showed that if $ n > 2k^3(s-1) $ and $ \mathcal{F} $ is an $ k $-uniform hypergraph on $ n $ vertices with
$
\delta(\mathcal{F}) \geq \binom{n-1}{k-1} - \binom{n-s}{k-1},
$
then $ \mathcal{F} $ has matching number at least $ s $. This result was later improved by Huang and Zhao \cite{huang2017degree}, who reduced the bound on $ n $ to $ n \geq 3k^2 s $, and further refined by Guo, Lu and Jiang \cite{GuoLuJiang2023} as follows.

\begin{theorem}{\rm\cite{huang2017degree}}\label{degree mat}
	Let $n,k,s$ be an integers such that $n\geq 3k^2 s$. If $\mathcal{F}\subseteq\binom{[n]}{k}$ with
	$
	d_n(\mathcal{F}) > \binom{n-1}{k-1} - \binom{n-s}{k-1},
	$
	then $\nu(\mathcal{F}) \ge s$.
\end{theorem}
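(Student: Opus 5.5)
We prove the statement by induction on $s$, using a greedy extension argument. For $s=1$ the hypothesis reads $d_n(\mathcal{F})>0$, so $\mathcal{F}\neq\emptyset$ and $\nu(\mathcal{F})\ge 1$. Suppose $s\ge 2$ and the statement holds for $s-1$. Since $\binom{n-1}{k-1}-\binom{n-s}{k-1}$ is increasing in $s$, the hypothesis for $s$ implies the hypothesis for $s-1$; moreover $n\ge 3k^2s\ge 3k^2(s-1)$. By induction $\mathcal{F}$ contains a matching $M=\{E_1,\dots,E_{s-1}\}$ of size $s-1$. Let $U=E_1\cup\dots\cup E_{s-1}$, so $|U|=k(s-1)$, and let $W=[n]\setminus U$. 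Then $|W|=n-k(s-1)\ge 3k^2s-k(s-1)$, which is large. If some edge of $\mathcal{F}$ lies entirely inside $W$, we obtain a matching of size $s$ and are done. So assume every edge meets $U$.

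The key step is a degree count over $W$. Fix $w\in W$. Every edge through $w$ meets $U$, and the number of $k$-sets through $w$ that avoid a fixed set $T$ of size $t$ is $\binom{n-1-t}{k-1}$. We cannot apply this directly with $T=U$, since $|U|=k(s-1)$ is larger than $s-1$. Instead we use the standard exchange argument of Bollob\'as--Daykin--Erd\H{o}s and Huang--Zhao. Call $E_i$ \emph{good} if it contains at least two vertices $x\neq y$ such that there are disjoint edges $A\ni x$ and $B\ni y$ with $A\setminus\{x\},B\setminus\{y\}\subseteq W$. If some $E_i$ is good, replacing $E_i$ by $A,B$ gives $s$ disjoint edges, because $A$ and $B$ meet $U$ only in $x$ and $y$ respectively. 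So assume no $E_i$ is good. Then each $E_i$ has at most one vertex $u_i$ whose link into $W$ is ``large''. For any other $x\in E_i$, the edges of the form $\{x\}\cup(k-1 \text{ vertices of } W)$ must pairwise intersect inside $W$, or else be covered by the link of $u_i$ via a disjointness obstruction.

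Now we count edges $F\ni w$ with $w\in W$. Partition them into two classes. The first class consists of edges meeting $T=\{u_1,\dots,u_{s-1}\}$; there are at most $\binom{n-1}{k-1}-\binom{n-s}{k-1}$ of these. The second class consists of edges meeting $U\setminus T$ but avoiding $T$. Summing the degrees over all $w\in W$ and using $d_w>\binom{n-1}{k-1}-\binom{n-s}{k-1}$, we get more than $|W|\bigl(\binom{n-1}{k-1}-\binom{n-s}{k-1}\bigr)$ incidences. The first class contributes at most this amount minus the $T$-edges through $w$ that are missing. Hence the second class must contribute roughly $|W|$ edges, each meeting some bad vertex $x\in U\setminus T$. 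By pigeonhole, some $x\in U\setminus T$ lies in at least $|W|/(k(s-1))$ of these edges, and the number of such edges is of order $|W|\binom{|W|}{k-3}$ after multiplicity. Using the Erd\H{o}s--Ko--Rado/Hilton--Milner bound on the intersecting link family of $x$ restricted to $W$, the link size is at most $\binom{|W|-1}{k-2}$. We will also need that the deficit at $T$ is large unless the edges through $T$ are nearly complete, and it is exactly this trade-off that the condition $n\ge 3k^2 s$ must resolve.

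The main obstacle is making this double count tight. The hypothesis only gives degree exceeding $\binom{n-1}{k-1}-\binom{n-s}{k-1}$ by $1$, so the slack must come entirely from the gain from bad vertices versus the loss at the $u_i$. I would first try the cleaner route of Huang--Zhao: consider the $k$-partite-like structure and show directly that if no $E_i$ is good, the total number of edges meeting $U$ is at most $|\mathcal{A}_1|$-type, i.e.\ $\sum_{w\in W}d_w\le |W|\bigl(\binom{n-1}{k-1}-\binom{n-s}{k-1}\bigr)+O(k^2 s\binom{n-2}{k-2})$. Then, exploiting $|W|\ge 3k^2 s-ks$, I would argue that the $O$-term forces some $w$ with $d_w\le\binom{n-1}{k-1}-\binom{n-s}{k-1}$, which contradicts the hypothesis.
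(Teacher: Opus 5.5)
The paper gives no proof of this statement. It is quoted from Huang--Zhao and used only as a black box in the base case of Lemma~\ref{main:lem}, so your argument must stand on its own, and as written it does not.

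\textbf{What is sound.} The opening is correct: induction gives an $(s-1)$-matching $M$, every edge meets $U$, and a good $E_i$ yields $s$ disjoint edges.

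\textbf{The missing step.} After that, the decisive step is only announced, and the structural claims you lean on are not right. Non-goodness of $E_i$ says that the $W$-links $L(x)=\{S\in\binom{W}{k-1}:S\cup\{x\}\in\mathcal{F}\}$ of distinct $x,y\in E_i$ are cross-intersecting. In particular, once $|L(x)|>(k-1)\binom{|W|-1}{k-2}$, every other vertex of $E_i$ has empty link. It does not make a single link intersecting, so the EKR/Hilton--Milner bound $\binom{|W|-1}{k-2}$ you invoke for a ``bad'' vertex is unjustified.

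More importantly, nothing in your sketch creates the deficit at $T$ on which everything hinges. An edge through $w$ that avoids $T$ is simply an extra edge on top of the at most $\binom{n-1}{k-1}-\binom{n-s}{k-1}$ edges through $w$ meeting $T$, unless you use $\nu(\mathcal{F})<s$ a second time. One workable version is as follows.
\begin{enumerate}[(a)]
\item Call $u\in U$ huge if $|L(u)|>k(s-1)\binom{|W|-1}{k-2}$. Choosing links of huge vertices greedily and disjointly shows that every edge avoiding the set $T$ of huge vertices meets some $E_i$ containing no huge vertex.
\item If every $E_i$ has a huge vertex, then $T$ is a cover of size $s-1$ and you are done.
\item Otherwise, let $d\ge1$ be the number of deficient $E_i$'s. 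The edges through $w$ meeting $T$ number at most $\binom{n-1}{k-1}-\binom{n-s}{k-1}-d\binom{n-s}{k-2}$. You would then have to show that, for some $w\in W$, the edges through $w$ avoiding $T$ (all of which meet a deficient $E_i$) number at most $d\binom{n-s}{k-2}$.
\end{enumerate}
That comparison is the real content of the theorem, and your proposal never carries it out. A crude count of the edges with two or more vertices in $U$ gives up to $kd\cdot k(s-1)\binom{n-3}{k-3}$ such edges through each $w$. This stays below $d\binom{n-s}{k-2}$ only when $n$ is of order $k^3s$, which is the Bollob\'as--Daykin--Erd\H{o}s range. Reaching $3k^2s$ needs a genuinely sharper estimate, which you do not supply.

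\textbf{The endgame.} Your proposed finish also fails on its own terms. A bound $\sum_{w\in W}d_w\le|W|\bigl(\binom{n-1}{k-1}-\binom{n-s}{k-1}\bigr)+O\bigl(k^2s\binom{n-2}{k-2}\bigr)$ only produces a vertex with $d_w\le\binom{n-1}{k-1}-\binom{n-s}{k-1}+O\bigl(k^2s\binom{n-2}{k-2}\bigr)/|W|$. For $k\ge3$ the error term is much larger than $|W|\le n$, so after averaging it still exceeds $1$. Meanwhile, the hypothesis allows every degree to be exactly the threshold plus one, so this bound yields no contradiction. Letting $n$ grow does not help, since the error grows at least as fast as $|W|$. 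The condition $|W|\ge 3k^2s-k(s-1)$ has to be spent inside the excess-versus-deficit comparison above. The final averaged bound must then come out at most $|W|\bigl(\binom{n-1}{k-1}-\binom{n-s}{k-1}\bigr)$ up to an error smaller than $|W|$, not after an additive error of this size has already been conceded.
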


\begin{theorem}{\rm\cite{GuoLuJiang2023}}\label{degree mat2}
	Let $k \ge 3$ be an integer. There exists a constant $n_0=n_0(k)$ such that the following holds. Let $n, s$ be integers such that $n \ge \max\{n_0,2sk\}$. If $\mathcal{F}\subseteq\binom{[n]}{k}$ with
	$
	d_n(\mathcal{F}) > \binom{n-1}{k-1} - \binom{n-s}{k-1},
	$
	then $\nu(\mathcal{F}) \ge s$.
\end{theorem}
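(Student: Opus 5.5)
The plan is to argue by contradiction. Suppose $\nu(\mathcal{F})\le s-1$ while $d_n(\mathcal{F})>\binom{n-1}{k-1}-\binom{n-s}{k-1}$. A simple double count does not suffice. Combining the degree bound with the Erd\H{o}s matching bound (Frankl's theorem for $n\ge (2s-1)k$) loses a factor of about $\frac{s-1}{k}\binom{n-s}{k-1}$, and Hilton--Milner-type stability does not recover this when $s$ is large. I will therefore split according to the size of $s$ relative to $n$: a ``sparse'' regime $n\ge C k^3 s$ and a ``dense'' regime $2sk\le n< Ck^3 s$.

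\emph{Sparse regime: reduction by high-degree vertices.} Call a vertex \emph{heavy} if $d(v)>2ks\binom{n-2}{k-2}$. A heavy vertex has an edge avoiding any prescribed set of at most $2ks$ other vertices, so heavy vertices can be matched greedily. Hence there are $t\le s-1$ heavy vertices; call this set $H$. Delete $H$ and let $\mathcal{F}'$ consist of the edges disjoint from $H$, on $n'=n-t$ vertices. Every edge of $\mathcal{F}$ through a non-heavy vertex $v$ that meets $H$ is among the $\binom{n-1}{k-1}-\binom{n-1-t}{k-1}$ sets through $v$ hitting $H$. Therefore
\[
\delta(\mathcal{F}')>\binom{n'-1}{k-1}-\binom{n'-(s-t)}{k-1},
\]
which is exactly the same threshold with parameters $(n',s'=s-t)$. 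Moreover, $\nu(\mathcal{F}')\le s'-1$ would follow by greedily extending a matching of $\mathcal{F}'$ with the heavy vertices. If $t=s-1$, then $s'=1$ and the threshold is $0$, so $\mathcal{F}'$ has an edge, a contradiction. Otherwise all degrees in $\mathcal{F}'$ are at most $\Delta:=2ks\binom{n-2}{k-2}$. Covering $\mathcal{F}'$ by the $k(s'-1)$ vertices of a maximum matching gives $|\mathcal{F}'|\le k(s'-1)\Delta$. On the other hand,
\[
|\mathcal{F}'|\ge \frac{n'}{k}\,\delta(\mathcal{F}')\approx \frac{n}{k}(s'-1)\binom{n-2}{k-2}.
\]
These two bounds contradict each other once $n\ge Ck^3 s$ for a suitable constant $C$.

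\emph{Dense regime: $s=\Theta_k(n)$.} Here the threshold equals $\bigl(1-(1-\tfrac{s}{n})^{k-1}+o(1)\bigr)\binom{n-1}{k-1}$, a fixed positive fraction of $\binom{n-1}{k-1}$. I will use the fractional-matching/absorbing framework (in the style of Alon--Frankl--Huang--R\"odl--Ruci\'nski--Sudakov). The steps are:
\begin{itemize}
\item[(i)] Show that the minimum degree condition forces a fractional matching of size noticeably larger than $s$, via LP duality. A fractional cover of weight $<s+\varepsilon n$ would force some vertex's degree below the threshold, by the same counting that makes $\mathcal{A}_1$ extremal.
\item[(ii)] Use the weak regularity lemma for hypergraphs to convert this fractional matching of the reduced hypergraph into an integral matching of $\mathcal{F}$ covering all but $\varepsilon n$ of the required vertices.
\item[(iii)] Use a small absorbing structure, built from the robust minimum degree, to enlarge this to a matching of size exactly $s$.
\end{itemize}
The requirement $n\ge n_0(k)$ comes from the regularity and absorbing steps. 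The requirement $n\ge 2sk$ is what makes the LP bound in step (i) go through with room to spare, and the assumption $k\ge 3$ enters in the absorbing lemma.

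\emph{Main obstacle.} I expect the main obstacle to be step (i) near the boundary $n\approx 2sk$. There one must show that the extremal value of the fractional problem is attained by the star-like cover (weight $1$ on $s-1$ vertices) and not by a clique-like configuration. I would try the Frankl-style shifting argument applied to the LP dual, which reduces to checking an inequality between $\binom{n-1}{k-1}-\binom{n-s}{k-1}$ and the degree arising from the $\mathcal{A}_k$-type construction. This inequality should hold precisely when $n\ge 2sk$.
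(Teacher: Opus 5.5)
The paper does not prove this statement. It is quoted from Guo--Lu--Jiang and used only as a black box in the base case of Lemma~\ref{main:lem}, so your proposal has to stand on its own.

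Your sparse regime does stand. The greedy bound $t\le s-1$ is correct, and the identity $n'-s'=n-s$ reproduces the threshold exactly after deleting $H$. The extension argument gives $\nu(\mathcal{F}')\le s'-1$. The comparison $2k^2s(s'-1)\binom{n-2}{k-2}\ge|\mathcal{F}'|>\frac{n-s}{k}(s'-1)\binom{n-s}{k-2}$ then yields a contradiction once $n\ge(4k^3+1)s$, say. This is a Bollob\'as--Daykin--Erd\H{o}s type argument.

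The dense regime has a genuine gap, and it starts at step (i), which is false as stated. The family $\{F\in\binom{[n]}{k}:F\cap[s]\neq\emptyset\}$ satisfies the hypothesis, since vertices outside $[s]$ have degree $\binom{n-1}{k-1}-\binom{n-s-1}{k-1}$. Yet $[s]$ is a cover of weight $s$, so its fractional matching number is exactly $s$. At the exact threshold there is no fractional slack at all.

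What step (ii) actually needs is worse: slack of order $\varepsilon n$ in the \emph{reduced} hypergraph, which inherits the degree condition only up to an error of order $\varepsilon n^{k-1}$. Take $\{F:F\cap[s-1]\neq\emptyset\}$ together with $O(n)$ edges inside $[s,n]$ that cover $[s,n]$. This family satisfies the hypothesis, but every matching of size $s$ must use one of the added edges. Those edges vanish under regularization, so steps (i)--(ii) never produce more than $s-1$ edges. Step (iii) cannot supply the missing edge either, because absorbing structures are found by counting arguments in which $O(n)$ edges are negligible.

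The missing idea is a stability/extremal-case dichotomy, as in K\"uhn--Osthus--Treglown's exact vertex-degree result for $k=3$. If $\mathcal{F}$ is $\varepsilon$-far from every family $\{F:F\cap T\neq\emptyset\}$ with $|T|=s-1$, you show that the reduced hypergraph has (rescaled) fractional matching number at least $s+\gamma n$. Otherwise you finish with an exact argument, for instance a heavy-vertex reduction like yours. Your heavy threshold would have to be redesigned for that: it is vacuous whenever $n\le 2k(k-1)s+1$, since then $2ks\binom{n-2}{k-2}\ge\binom{n-1}{k-1}$. So the argument does not break where you expected (star versus clique-like covers near $n=2sk$); it breaks at the missing extremal case.
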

We generalize Theorem \ref{huang2} from intersecting families to families with matching number less than $s$. Notice that the bound on $n$ is the same as that in Theorem \ref{degree mat2}. Hence, this result can also be viewed as a generalization of Theorem \ref{degree mat2}.
\begin{theorem}\label{thm:main}
Let $k\ge 2$ and $s\ge 1$ be integers.  There exists a constant $n_0=n_0(k)$ such that the following holds. Let $\mathcal{F}$ be a $k$-uniform hypergraph on $n$ vertices with $\nu(\mathcal{F})< s$ and degree sequence $d_1(\mathcal{F})\ge d_2(\mathcal{F})\ge\cdots\ge d_n(\mathcal{F})$.
If $n>2sk\geq n_0$, then
$$
d_{2sk}(\mathcal{F}) \leq \binom{n-1}{k-1}-\binom{n-s}{k-1}.
$$
\end{theorem}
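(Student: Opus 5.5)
We argue by contradiction and induct on $s$. The case $s=1$ is trivial: $\nu(\mathcal{F})<1$ means $\mathcal{F}=\emptyset$. For $s=2$ the family is intersecting, and $d_{4k}\le d_{2k+1}\le\binom{n-2}{k-2}\le\binom{n-1}{k-1}-\binom{n-2}{k-1}$, so Theorem~\ref{huang2} gives the claim. Now let $s\ge 3$. Suppose $\nu(\mathcal{F})\le s-1$ and $d_{2sk}(\mathcal{F})>\binom{n-1}{k-1}-\binom{n-s}{k-1}$. Let $X$ be the set of the $2sk$ vertices of largest degree. Every vertex of $X$ then has degree exceeding $D_s:=\binom{n-1}{k-1}-\binom{n-s}{k-1}$. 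We aim to build $s$ pairwise disjoint edges greedily, using the vertices of $X$ as anchors.

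\textbf{Step 1 (peeling one edge).} Pick $x\in X$. We look for an edge $F\ni x$ such that the family $\mathcal{F}'=\{G\in\mathcal{F}: G\cap F=\emptyset\}$, viewed on the vertex set $[n]\setminus F$ of size $n'=n-k$, still has $2(s-1)k$ vertices of large degree. Precisely, we want
\[
d_{\mathcal{F}'}(v)>\binom{n'-1}{k-1}-\binom{n'-(s-1)}{k-1}\quad\text{for at least }2(s-1)k \text{ vertices } v\in X\setminus F .
\]
Note that $n'>2(s-1)k$ holds automatically. If we can find such an $F$, the induction hypothesis gives $\nu(\mathcal{F}')\ge s-1$. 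Adding $F$ yields $\nu(\mathcal{F})\ge s$, a contradiction.

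\textbf{Step 2 (estimating the loss of degree).} For $v\in X\setminus F$, the degree lost in passing to $\mathcal{F}'$ is at most the number of edges containing $v$ and meeting $F$. This is at most $\sum_{u\in F\setminus\{v\}}\deg(\{u,v\})$, where $\deg(\{u,v\})\le\binom{n-2}{k-2}$. Meanwhile,
\[
D_s-\Bigl[\tbinom{n-k-1}{k-1}-\tbinom{n-k-s+1}{k-1}\Bigr]\approx \binom{n-2}{k-2}+k(s-1)\binom{n-3}{k-3}-\cdots,
\]
so we have only about one full codegree, plus lower-order terms, to spare. The crude bound of $k$ codegrees is therefore too weak. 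We must choose $F$ so that most vertices $v$ of $X$ have small total codegree with $F\setminus\{x\}$. The idea is to average over edges $F\ni x$: the quantity $\sum_{F\ni x}\sum_{u\in F\setminus\{x\}}\deg(\{u,v\})$ counts pairs of edges through $x$ and through $v$. Since $\deg(x)\approx(s-1)\binom{n-2}{k-2}$ and the codegrees $\deg(\{u,v\})$ sum to $(k-1)\deg(v)$, a typical $u$ has $\deg(\{u,v\})$ of order $\deg(v)/n\approx s\binom{n-2}{k-2}k/n$. This is small provided $n$ is large relative to $sk$, but only up to a constant when $n\approx 2sk$. To handle this, we separate the heavy pairs, those with $\deg(\{u,v\})\ge\epsilon\binom{n-2}{k-2}$. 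The number of heavy partners of a vertex $v$ is at most about $\deg(v)/(\epsilon\binom{n-2}{k-2}/(k-1))=O(sk/\epsilon)$. We choose $F$ to avoid, for each $v\in X$, all heavy partners of $v$, except for at most $2k$ exceptional vertices $v$, which are exactly the slack that $X$ provides beyond $2(s-1)k$. Since $\deg(x)>D_s$ and a family avoiding a set $W$ of size $O(s k^2/\epsilon)$ loses only a small fraction of the edges through $x$ when $n\ge n_0(k)$ is large, such an $F$ exists.

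\textbf{Main obstacle.} The hard part is the regime $n$ close to $2sk$, where the light-pair contributions sum to roughly $\binom{n-2}{k-2}\cdot\frac{k(s-1)k}{n}$. This is comparable to the slack $\binom{n-2}{k-2}$, so the estimate only barely closes. If it fails, I would instead use a stability argument in the style of Theorem~\ref{degree mat2}. First, the high degrees of $X$ force a cover $T$ of size $s-1$ to carry almost all edges; otherwise we could find $s$ disjoint edges directly using the $2sk$ high-degree vertices. Then every $v\in X\setminus T$, of which there are at least $2sk-s+1$, has degree at most $D_s$, because its edges must meet $T$. This forces equality and gives the contradiction.
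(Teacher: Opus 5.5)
\textbf{There is a genuine gap: Step~2 is not a proof, and its heuristic fails in the case it has to handle.} Step~2 is the only place where the degree hypothesis is turned into a disjoint edge.

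\emph{Avoiding heavy partners is impossible near the extremal configuration.} Suppose almost every edge meets some $(s-1)$-set $T$. Then every $t\in T$ is a heavy partner of every $v$, so $T\subseteq W$. But essentially every edge through $x$ meets $T$, so no $F\ni x$ avoids $W$. Your claim that avoiding $W$ costs only a small fraction of the edges through $x$ is therefore false: $\deg(x)$ may be only of order $(s-1)\binom{n-2}{k-2}$.

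\emph{The counting does not hold up either.} The heavy-partner bound $O(sk/\epsilon)$ assumes $\deg(v)=O(s\binom{n-2}{k-2})$, which fails for high-degree vertices of $X$. Moreover, for $n\approx 2sk$ a set of size $O(sk^2/\epsilon)$ is larger than the whole ground set.

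\emph{The peeling step is exactly tight, so approximate estimates cannot close it.} Take $\{G:G\cap T\neq\emptyset\}$ and choose $F$ with $F\cap T=\{t\}$. A vertex $v\notin T\cup F$ then loses exactly $D_s-\bigl[\binom{n-k-1}{k-1}-\binom{n-k-s+1}{k-1}\bigr]$, where $D_s=\binom{n-1}{k-1}-\binom{n-s}{k-1}$. So the only room you have is the excess $\deg(v)-D_s$, which may be $1$. No averaging argument with $\epsilon\binom{n-2}{k-2}$ error terms can succeed.

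\emph{The fallback is circular.} ``Almost all edges meet $T$'' does not give $\deg(v)\le D_s$ for $v\notin T$. In the family $\mathcal{G}_1\cup\mathcal{G}_2$ from the paper's optimality remark, only the edges of $\mathcal{G}_2$ miss $T=[s-1]$, yet all $k+s-2$ vertices of $[s,k+2s-3]$ exceed $D_s$. Controlling how many vertices such exceptional edges can boost is the entire content of the theorem. Likewise, ``otherwise we could find $s$ disjoint edges directly'' is just the statement you are trying to prove.

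\emph{The induction on $s$ does not fit the hypothesis $2sk\ge n_0(k)$.} The case $s-1$ is available only when $2(s-1)k\ge n_0$. Your base cases $s=1,2$ therefore do not anchor the induction unless Step~2 works with no largeness assumption on $n$, and it does not, since you invoke $n\ge n_0(k)$.

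\textbf{The missing idea is to compress onto the top $2sk$ vertices instead of building the matching by hand.}
\begin{enumerate}[(1)]
\item Order vertices by degree and apply the shifts $\mathcal{S}_{i,j}$ with $i\le 2sk<j$. These keep $\nu<s$ and do not decrease any degree in $[2sk]$.
\item The trace $\mathcal{G}=\{F\cap[2sk]:F\in\mathcal{F}\}$ is then an upward closed family on exactly $2sk$ points with $\nu(\mathcal{G})<s$, by Lemma~\ref{ell-shifted}.
\item Apply Theorem~\ref{degree mat2} on this $2sk$-point ground set; this is where $2sk\ge n_0$ is used. It gives a vertex $i$ of small $k$-degree in $\mathcal{G}$.
\item A Kruskal--Katona downward induction (Lemma~\ref{main:lem}) extends this to $d_i^{(m)}(\mathcal{G})\le\binom{2sk-1}{m-1}-\binom{2sk-s}{m-1}$ for all $m\le k$.
\item Vandermonde's identity then turns $d_i(\mathcal{F})\le\sum_m d_i^{(m)}(\mathcal{G})\binom{n-2sk}{k-m}$ into exactly $D_s$.
\end{enumerate}
This exact count is what the tightness described above requires, and your peeling approach has no substitute for it.
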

When $sk$ is small, $d_{2sk}$ can be replaced by $d_{3k^2 s}$.
\begin{rem}\label{rem}
	Let $k\ge 2$ and $s\ge 1$ be integers.  There exists a constant $n_0=n_0(k)$ such that the following holds. Let $\mathcal{F}$ be a $k$-uniform hypergraph on $n$ vertices with $\nu(\mathcal{F})< s$ and degree sequence $d_1(\mathcal{F})\ge d_2(\mathcal{F})\ge\cdots\ge d_n(\mathcal{F})$. If $n>3k^2 s$, then
	$$
	d_{3k^2 s}(\mathcal{F}) \leq \binom{n-1}{k-1}-\binom{n-s}{k-1}.
	$$
\end{rem}

Let $ \mathcal{F} \subseteq \binom{[n]}{k} $ be an $ k$-uniform hypergraph on vertex set $[n] = \{1, 2, \ldots, n\}$. For an $ k $-set of vertices $ S \subseteq [n] $, the degree of $ S $ is defined as $ \deg(S) = \sum_{v \in S} \deg(v) $ and the minimum of $ \deg(S) $ over all non-edge-$ k $-subsets $ S \notin E(\mathcal{F}) $ of $ V(\mathcal{F}) $ is the \textit{Ore-degree} of $ \mathcal{F} $, denoted by $ \sigma_k(\mathcal{F}) $. Balogh, Palmer and Raeisi proved the following result.
\begin{theorem}{\rm \cite{balogh2026}}
	Let $ s \geq 2 $ and $ n \geq 3k^2(s-1) $. If $ \mathcal{F} $ is an $ k $-uniform hypergraph on $ n $ vertices and
	$$
	\sigma_k(\mathcal{F}) > k \left( \binom{n-1}{k-1} - \binom{n-s}{k-1} \right),
	$$
	then $ \mathcal{F} $ contains $ s $ pairwise disjoint edges.
\end{theorem}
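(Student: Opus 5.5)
The statement we must address is the Balogh--Palmer--Raeisi theorem itself, as quoted: $s\ge 2$, $n\ge 3k^2(s-1)$, and $\sigma_k(\mathcal F)>k\bigl(\binom{n-1}{k-1}-\binom{n-s}{k-1}\bigr)$ force $\nu(\mathcal F)\ge s$. Set $D=\binom{n-1}{k-1}-\binom{n-s}{k-1}$. I would argue by contradiction, assuming $\nu(\mathcal F)\le s-1$. The plan is to reduce the Ore-degree condition to a statement about many vertices of large degree, and then apply a degree-sequence result of the type of Theorem \ref{degree mat}. Remark \ref{rem} is the natural tool here, since it controls $d_{3k^2 s'}$ for families with $\nu<s'$.

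\textbf{Step 1: the non-edges force large degrees.} Call a vertex \emph{light} if $\deg(v)\le D$, and let $L$ be the set of light vertices. I will show that if $|L|\ge k$ then some $k$-subset of $L$ is a non-edge. Its Ore-degree would then be at most $kD$, contradicting the hypothesis.
\begin{itemize}
\item Suppose instead that every $k$-subset of $L$ is an edge.
\item If $|L|\ge sk$, then $L$ alone contains $s$ disjoint edges, which is impossible.
\item So $k\le |L|<sk$, and in particular at least $n-sk+1$ vertices are heavy, meaning they have degree greater than $D$.
\end{itemize}
In this case I would use a direct argument instead. The heavy vertices number at least $n-sk\ge 3k^2 s-sk$, which is still linear-in-$k^2s$ many. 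Then $d_{n-sk+1}>D$, and with $n$ large this index exceeds $3k^2(s-1)$ provided the Remark-type bound applies at the right index. Otherwise, if $|L|\le k-1$, at least $n-k+1$ vertices are heavy.

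\textbf{Step 2: apply the degree-sequence bound.} In either case, $d_m(\mathcal F)>D$ for some $m$ with $m\ge n-sk$. The remaining task is to show that this forces $\nu\ge s$. Remark \ref{rem}, applied with $\nu(\mathcal F)<s$, gives $d_{3k^2 s}\le D$, which contradicts $d_m>D$ as long as $m\ge 3k^2 s$. Since $n\ge 3k^2(s-1)$ only, the index $n-sk$ may fall just short of $3k^2 s$. To close this gap I would redo the greedy/Huang--Zhao argument directly:
\begin{itemize}
\item Take a maximum matching $M$ with $|M|\le s-1$, and let $V(M)$ be its vertex set, of size at most $k(s-1)$.
\item Each heavy vertex $v\notin V(M)$ has all of its edges meeting $V(M)$.
\item Counting the edges through $v$ that meet $V(M)$ in a single matching edge, and using the augmenting-path style swap of Huang--Zhao, shows that some matching edge has two vertices with $\Omega(n^{k-2})$ private degree to many outside vertices. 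This lets us swap and enlarge $M$.
\item Here the count $\deg(v)>D$ is exactly what beats the maximum number of edges through $v$ meeting $s-1$ fixed vertices, which is $D$.
\end{itemize}

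\textbf{Main obstacle.} The hardest part is the swapping step when $V(M)$ has $k(s-1)$ vertices rather than $s-1$. The bound $D$ only controls edges through $v$ that meet a fixed $(s-1)$-set, so I would need to show that for each matching edge, essentially one vertex carries all the degree of the heavy vertices. This is where $n\ge 3k^2(s-1)$ enters: it bounds the edges through $v$ that use two or more vertices of $V(M)$ by $\binom{k(s-1)}{2}\binom{n}{k-3}$, which is smaller than the slack when $n\ge 3k^2(s-1)$.
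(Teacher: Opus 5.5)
The paper does not prove this statement. It is quoted from \cite{balogh2026}, and the nearest argument in the paper is the proof of Corollary~\ref{cor}. Your Step~1 is correct and is exactly that reduction: under the Ore condition and $\nu(\mathcal F)\le s-1$, fewer than $sk$ vertices have degree at most $D$, so at least $n-sk+1$ vertices are heavy. Combined with Theorem~\ref{thm:main}, this gives the statement whenever $2sk\ge n_0(k)$, since $3k^2(s-1)\ge 3sk$ for $k,s\ge 2$; this is what the paper does. Combined with Remark~\ref{rem}, it gives the statement for $n\ge 3k^2s+sk-1$. (Incidentally, you only know $n-sk\ge 3k^2(s-1)-sk$, not $3k^2s-sk$.) In the remaining window, $2sk<n_0(k)$ and $3k^2(s-1)\le n\le 3k^2s+sk-2$, nothing in the paper applies, and everything rests on your Step~2.

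That step is the genuine gap. You need: if $\nu(\mathcal F)\le s-1$ and $n\ge 3k^2(s-1)$, then at least $sk$ vertices have degree at most $D$. Applied to a family with $\delta(\mathcal F)>D$, this already implies Theorem~\ref{degree mat} in the wider range $n\ge 3k^2(s-1)$. So it carries the whole difficulty of the statement, and your bullets only say what you would like to prove.

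The mechanism you sketch also fails as stated. $D$ is exactly the number of $k$-sets through $v$ meeting a fixed $(s-1)$-set not containing $v$. Hence $\deg(v)>D$ leaves slack $1$, not slack of order $n^{k-3}$ that could absorb the $\binom{k(s-1)}{2}\binom{n}{k-3}$ edges through $v$ meeting $V(M)$ twice. If only $s-2$ or fewer high-link vertices are available, the slack becomes $\binom{n-s}{k-2}$, but that crude per-vertex bound is beaten only once $n$ is of order $k^3s^2$.

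Moreover, in near-extremal configurations each $e_i\in M$ has exactly one high-link vertex $u_i$. So counting cannot be expected to produce a matching edge with two such vertices, as your third bullet claims.

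The contradiction has to come from an exact structural argument with two parts:
\begin{itemize}
\item Every edge through a heavy $v\notin V(M)$ that avoids $\{u_1,\dots,u_{s-1}\}$ must be combined with pairwise disjoint extensions through the corresponding $u_i$ into a larger matching. This includes edges meeting several $e_i$ in vertices other than the $u_i$.
\item When some $e_i$ has no such vertex, the loss of $\binom{n-s}{k-2}$ must be shown to outweigh all contributions of low-link vertices and of edges meeting $V(M)$ more than once.
\end{itemize}
Making both work with only $n\ge 3k^2(s-1)$ is the substance of the Balogh--Palmer--Raeisi argument, and none of it is supplied.
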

And they conjectured the result holds when $n>sk$. For $ k = 2 $, a generalization of this conjecture was proved in \cite{OMIDI2024113785}, but it remains open for $ k \geq 3 $. We prove that the bound on $n$ can be linear in $sk$ for large $s$.

\begin{cor} \label{cor}
	Let $k\ge 2$ and $s\ge 1$ be integers.  There exists a constant $n_0=n_0(k)$ such that the following holds.  If $ \mathcal{F} $ is a $k$-uniform hypergraph on $n$ vertices with $n\geq 3sk$ and $2sk\geq n_0(k)$, and
	$$
	\sigma_k(\mathcal{F}) > k \left( \binom{n-1}{k-1} - \binom{n-s}{k-1} \right),
	$$
	then $ \mathcal{F} $ contains $ s $ pairwise disjoint edges.
\end{cor}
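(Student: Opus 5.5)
We will derive Corollary~\ref{cor} from Theorem~\ref{thm:main} by contraposition. Suppose $\mathcal{F}$ has no $s$ pairwise disjoint edges, i.e.\ $\nu(\mathcal{F})<s$. Since $n\ge 3sk>2sk$ and $2sk\ge n_0(k)$, Theorem~\ref{thm:main} gives
\[
d_{2sk}(\mathcal{F})\le \binom{n-1}{k-1}-\binom{n-s}{k-1}=:D .
\]
Let $L=\{v: \deg(v)\le D\}$. Since all vertices beyond the first $2sk-1$ in the degree ordering have degree at most $D$, we get $|L|\ge n-2sk+1\ge sk+1$, using $n\ge 3sk$.

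The goal is to find a $k$-subset $S\subseteq L$ with $S\notin E(\mathcal{F})$. Such an $S$ has $\deg(S)\le kD$, so $\sigma_k(\mathcal{F})\le kD$, contradicting the hypothesis.

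Suppose instead that every $k$-subset of $L$ is an edge of $\mathcal{F}$. Since $|L|\ge sk+1\ge sk$, we can partition $sk$ vertices of $L$ into $s$ disjoint $k$-sets. Each of these is an edge, so $\nu(\mathcal{F})\ge s$, which contradicts $\nu(\mathcal{F})<s$. Hence some $k$-subset of $L$ is a non-edge, and we are done.

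The role of $n\ge 3sk$ is precisely to guarantee $|L|\ge sk$, so that a complete $k$-graph on $L$ would already contain a matching of size $s$. The only substantive input is Theorem~\ref{thm:main}; the rest is counting, so I expect no real obstacle beyond checking that its hypotheses ($n>2sk\ge n_0$) are met. The degenerate case $s=1$ is trivial and needs no separate treatment: $\nu(\mathcal{F})<1$ means $\mathcal{F}$ is empty, so every $k$-set is a non-edge of degree $0$, and $\sigma_k(\mathcal{F})=0$ does not exceed $k\bigl(\binom{n-1}{k-1}-\binom{n-1}{k-1}\bigr)=0$.
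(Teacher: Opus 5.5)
Your proposal is correct and matches the paper's proof: assume $\nu(\mathcal{F})<s$ and apply Theorem~\ref{thm:main} to get that all vertices from position $2sk$ onward have degree at most $\binom{n-1}{k-1}-\binom{n-s}{k-1}$. Then use $n\ge 3sk$ so that these at least $sk+1$ vertices cannot all be spanned by edges, which yields a non-edge whose degree sum violates the $\sigma_k$ hypothesis.
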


By sacrificing the range of $n$ in Theorem \ref{thm:main}, we can obtain the same bound for the $(k+2s-2)$-th largest degree vertex.
\begin{theorem}\label{thm:main2}
Let $k\ge 2$, $s\ge 2$ be integers and $\mathcal{F}$ be a $k$-uniform hypergraph on $n$ vertices with $\nu(\mathcal{F})< s$ and degree sequence $d_1(\mathcal{F})\ge d_2(\mathcal{F})\ge\cdots\ge d_n(\mathcal{F})$. There exists an integer $n_0=n_0(k,s)$ such that if $n\geq n_0$, then
$$
d_{k+2s-2}(\mathcal{F}) \leq \binom{n-1}{k-1}-\binom{n-s}{k-1}.
$$
\end{theorem}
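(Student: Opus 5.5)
We argue by contradiction. Assume $\nu(\mathcal{F})\le s-1$ and that the set $T$ of the $k+2s-2$ vertices of largest degree satisfies $\deg(v)>\binom{n-1}{k-1}-\binom{n-s}{k-1}$ for every $v\in T$. Throughout we use that the right-hand side equals $(s-1)\binom{n-2}{k-2}-O_{k,s}(n^{k-3})$. It also equals exactly the number of $(k-1)$-subsets of $[n]\setminus\{v\}$ that meet a fixed $(s-1)$-set not containing $v$. The plan is a two-level stability argument, with $n\ge n_0(k,s)$ large.

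\emph{Step 1 (huge vertices).} Fix a small $\varepsilon=\varepsilon(k,s)>0$. Let $H$ be the set of vertices of degree at least $\varepsilon\binom{n-1}{k-1}$, and put $h=|H|$. Then $h\le s-1$: given $s$ huge vertices, we pick disjoint edges through them greedily. At each step the edges through the current vertex that hit the at most $sk$ previously used vertices number $O(sk\,n^{k-2})=o(\varepsilon n^{k-1})$, so a disjoint edge always exists. Let $\mathcal{F}'$ be the family of edges avoiding $H$ and set $t=s-1-h$. The same greedy extension shows $\nu(\mathcal{F}')\le t$: a matching of size $t+1$ in $\mathcal{F}'$, together with one edge through each vertex of $H$ avoiding everything else, would give $s$ disjoint edges. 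For $v\in T\setminus H$, at most $\binom{n-2}{k-2}$ edges contain $v$ and a fixed vertex of $H$. Hence
\[
\deg_{\mathcal{F}'}(v)\ \ge\ \deg(v)-\Bigl(\tbinom{n-1}{k-1}-\tbinom{n-1-h}{k-1}\Bigr)\ >\ \tbinom{n-1-h}{k-1}-\tbinom{n-s}{k-1}.
\]
The last quantity is $\approx t\binom{n-2}{k-2}$, and it is exactly the number of $(k-1)$-sets avoiding $H\cup\{v\}$ that meet a fixed $t$-set. If $t=0$, every $v\in T\setminus H\neq\emptyset$ lies in an edge of $\mathcal{F}'$, which contradicts $\nu(\mathcal{F}')=0$. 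So we may assume $t\ge1$. Moreover $T\setminus H$ has at least $k+2s-2-h=k+2t+h\ge k+2t$ vertices, each of $\mathcal{F}'$-degree $>t\binom{n-2}{k-2}-O(n^{k-3})$.

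\emph{Step 2 (the pair level).} Now $\mathcal{F}'$ has $\nu\le t$ and maximum degree $<\varepsilon\binom{n-1}{k-1}$. The standard delta-system (kernel) argument for matching-bounded families then shows the following. All but $O(n^{k-3})$ edges of $\mathcal{F}'$ contain a pair from a graph $G$ of \emph{heavy pairs}, namely pairs of codegree at least $\varepsilon'\binom{n-2}{k-2}$. Moreover $G$ has bounded size and $\nu(G)\le t$: disjoint heavy pairs extend greedily to disjoint edges, as in Step 1. For $v\in T\setminus H$, the bound $\deg_{\mathcal{F}'}(v)>t\binom{n-2}{k-2}-O(n^{k-3})$ then forces $\deg_G(v)\ge t$ up to lower-order terms. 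More precisely, the link of $v$ restricted to $G$-neighbours must contain essentially all $(k-2)$-sets through at least $t$ neighbours, with strictly more than the full count unless the structure is rigid.

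We then analyse the graph $G$, which has $\nu(G)\le t$ and at least $k+2t$ vertices of degree $\ge t$. By the Gallai--Edmonds/Berge structure, a graph with $\nu\le t$ in which many vertices have degree $\ge t$ is either a star-like configuration with a cover of size $t$, or it lives inside a set of at most $2t+1$ vertices. In the first case, the cover vertices would have degree $\Omega(n^{k-1})$, i.e.\ be huge, which contradicts their exclusion from $H$. In the second case, at most $2t+1<k+2t$ vertices can have degree $\ge t$ in $G$. Vertices outside both configurations then have $\mathcal{F}'$-degree only $O(n^{k-3})+(\text{fewer than }t)\binom{n-2}{k-2}$, which falls below the threshold. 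The surplus "$+1$" in the strict inequality is used to exclude the equality configurations, where a vertex has exactly $t$ heavy neighbours and its link is exactly the extremal one.

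\emph{Main obstacle.} I expect Step 2 to be the hardest part. The difficulty is making the transfer from $\mathcal{F}'$ to the heavy-pair graph $G$ precise enough that an error of order $n^{k-3}$ cannot compensate for a missing $G$-neighbour. The count must also account for the additional $k-1$ vertices beyond $2t+1$: these come from the possibility that edges of $\mathcal{F}'$ themselves, of size $k$, can host high-degree vertices when $t$ edges are "blown up". This is where the optimal number $k+2s-2$ arises, and the extremal example (a few dense sets together with $H$) has to be ruled out by the strict inequality. I would first try induction on $k$, applied to links of vertices, to control this level.
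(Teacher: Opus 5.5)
Your Step 1 is sound: greedy extension gives $|H|\le s-1$ and $\nu(\mathcal{F}')\le t$, and the bound $\deg_{\mathcal{F}'}(v)>\binom{n-1-h}{k-1}-\binom{n-s}{k-1}$ is right. Step 2, which you leave open, fails in two concrete places.

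First, with $H$ defined by degree, your claim that all but $O(n^{k-3})$ edges of $\mathcal{F}'$ contain a heavy pair, with $G$ bounded, is false.
\begin{itemize}
\item For $k\ge3$, let $\mathcal{F}'$ be the star at $w$ whose link consists of all $(k-1)$-sets containing one of $\lfloor n/3\rfloor$ disjoint pairs $\{a_j,b_j\}$. Then $\nu=1$ and $\deg(w)=O(n^{k-2})$, so $w\notin H$. But every pair has codegree $O(n^{k-3})$, so $\Theta(n^{k-2})$ edges contain no heavy pair.
\item If instead the link is all $(k-1)$-sets meeting a set $X$ of size $\lfloor \varepsilon n/(2k)\rfloor$, then still $w\notin H$, yet all $\Theta(n)$ pairs $\{w,x\}$ with $x\in X$ are heavy, so $G$ is not bounded.
\end{itemize}
The correct dividing line is not degree but being the kernel of a sunflower in $\mathcal{F}$ with $(s-1)k+1$ petals, i.e.\ a singleton of the $\Delta$-base. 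Such vertices are still greedily extendable, so your peeling works verbatim. Afterwards every edge of $\mathcal{F}'$ contains a base set of size at least $2$. This gives $\deg_{\mathcal{F}'}(v)\le\deg_G(v)\binom{n-2}{k-2}+O(n^{k-3})$ with $G=\mathcal{B}^{*(2)}_{s,k}(\mathcal{F}')$ bounded (Sunflower Lemma) and $\nu(\mathcal{B}^{*}_{s,k}(\mathcal{F}'))\le t$ (greedy again). Hence $\deg_G(v)\ge t$ on $T\setminus H$. The paper instead removes one singleton kernel and inducts on $s$.

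Second, and this is the missing key idea, you dismiss the case where $G$ has a cover of size $t$ by claiming its vertices would have degree $\Omega(n^{k-1})$. Nothing forces that. With $G$ bounded, a cover vertex has degree $O(n^{k-2})$, and the family $\mathcal{G}_1$ in the optimality remark after the theorem exhibits exactly this configuration with the non-huge cover $[s-1]$. In fact, for $k\ge3$ this case is forced, and it is where the contradiction comes from:
\begin{itemize}
\item At least $|T\setminus H|\ge k+2t\ge 2t+3$ vertices have $G$-degree at least $t$, so Lemma~\ref{str2} gives a cover $C$ of $G$, which we may take of size exactly $t$.
\item The at least $k+t$ vertices of $T\setminus H$ outside $C$ are adjacent to all of $C$, so $K_{t,k+t}\subseteq G$.
\item An edge of $\mathcal{F}'$ missing $C$ contains a base set $B$ with $2\le|B|\le k$ and $B\cap C=\emptyset$. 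Deleting $B$ leaves a $K_{t,t}$, whose perfect matching together with $B$ gives $t+1$ disjoint base sets, contradicting $\nu(\mathcal{B}^{*}_{s,k}(\mathcal{F}'))\le t$.
\item Thus $H\cup C$ covers $\mathcal{F}$ with $s-1$ vertices. Every vertex outside it has degree at most $\binom{n-1}{k-1}-\binom{n-s}{k-1}$, contradicting $|T|=k+2s-2>s-1$.
\end{itemize}
This is where $k+2s-2$ comes from: the $k$ spare vertices absorb the deletion of $B$, not blown-up edges. No equality or rigidity analysis and no induction on $k$ is needed, and $k=2$ is just Lemma~\ref{str1}. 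With these two repairs your one-shot peeling is a valid alternative to the paper's induction on $s$, and it even dispenses with the Frankl--Wang base case.
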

\begin{rem} We need to point out that $k+2s-2$ is {\bf optimal} here.
	Define
\begin{align*}
     \mathcal{G}_1&=\left\{G\in\binom{[n]}{k}:G\cap[s-1]\neq\emptyset,G\cap[s,k+2s-3]\neq\emptyset\right\},\\
     \mathcal{G}_2&=\left\{G\in\binom{[s,k+2s-3]}{k}:s\in G\right\}.
\end{align*}
Every member of $\mathcal{G}_1$ meets $[s-1]$, so $\nu(\mathcal{G}_1)\leq s-1$, while $\nu(\mathcal{G}_2)=1$.
Consider the family
$
    \mathcal{G}=\mathcal{G}_1\cup\mathcal{G}_2.
$
We claim that $\nu(\mathcal{G})<s$.
If there exists a matching of size $s$ in $\mathcal{G}$, it must choose one set from $\mathcal{G}_2$ and $s-1$ sets from $\mathcal{G}_1$. However, if we fix a set $G\in \mathcal{G}_2$, the $s-1$ sets must be chosen from
$$
\left\{F\in\binom{[n]}{k}:F\cap[s-1]\neq\emptyset,\ F\cap([s,k+2s-3]\setminus G)\neq\emptyset\right\}.
$$
The set $[s,k+2s-3]\setminus G$ is a vertex cover of size $s-2$ of the above family. Thus, we can find at most $s-2$ pairwise disjoint members of $\mathcal{G}_1$ avoiding $G$, a contradiction. Hence $\nu(\mathcal{G})<s$. For sufficiently large $n$, one can choose $s-1$ pairwise disjoint members of $\mathcal{G}_1$, and therefore $\nu(\mathcal{G})=s-1$.

For $i\in[s-1]$, we have
$$
d_i(\mathcal{G})\geq \binom{n-1}{k-1}-\binom{n-k-s+1}{k-1}
>\binom{n-1}{k-1}-\binom{n-s}{k-1}.
$$
For $i\in[s,k+2s-3]$, the contribution of $\mathcal{G}_1$ is exactly $\binom{n-1}{k-1}-\binom{n-s}{k-1}$, and $i$ has positive degree in $\mathcal{G}_2$. Hence $d_i(\mathcal{G}) > \binom{n-1}{k-1}-\binom{n-s}{k-1}$ for every $i \in [k+2s-3]$.

Indeed, there are many other families that play the same role as $\mathcal{G}$. For instance, we may replace $\mathcal{G}_2$ by any intersecting family in $\binom{[s, k+2s-3]}{k}$ that has positive minimum degree. For example, when $s \geq 3$, the following two families also preserve the property that the matching number is less than $s$ and that the $(k+2s-3)$-th largest degree exceeds $\binom{n-1}{k-1} - \binom{n-s}{k-1}$:
\begin{align*}
    \mathcal{H}_1=&\left\{H\in\binom{[n]}{k}:H\cap[s-1]\neq\emptyset,H\cap[s,k+2s-3]\neq\emptyset\right\}\\ \cup&\left\{H\in\binom{[s,k+2s-3]}{k}:s\in H,H\cap[s+1,k+s]\neq\emptyset\right\}\cup\{[s+1,k+s]\};\\
    \mathcal{H}_2=&\left\{H\in\binom{[n]}{k}:H\cap[s-1]\neq\emptyset,H\cap[s,k+2s-3]\neq\emptyset\right\}\\ \cup&\left\{H\in\binom{[s,k+2s-3]}{k}:|H\cap[s,s+2]|\geq 3\right\}.
\end{align*}
\end{rem}

The paper is organised as follows. Section~\ref{shift} recalls the necessary shifting lemmas and the Kruskal--Katona theorem. Section~\ref{2sk} contains the proof of Theorem~\ref{thm:main} and Corollary~\ref{cor}. Section~\ref{large n} proves the bound for the $(k+2s-2)$-th largest degree by introducing some results on $\Delta$-systems and structural results for simple graphs. Section~\ref{open} discusses extensions and an open problem.

\section{Shifting and shadow}\label{shift}
The \emph{shadow} of $\mathcal{F}\subseteq\binom{[n]}{k}$, denoted by $\partial\mathcal{F}$, is the set of all $(k-1)$-element subsets that are contained in at least one member of $\mathcal{F}$. The \emph{colexicographic order} on $k$-sets compares two sets by their largest element, then the second largest, etc.; equivalently, $A < B$ if $\max(A \setminus B) < \max(B \setminus A)$. The initial segment of this order of a given size forms the family that, under the Kruskal-Katona theorem, minimizes the size of the shadow for a given number of $k$-sets.

\begin{theorem}[Kruskal-Katona Theorem]
For any $\mathcal{F}\subseteq\binom{[n]}{k}$ with size $m$, we have
$$|\partial\mathcal{F}| \geq |\partial\mathcal{C}(n,k,m)|,$$
where $\mathcal{C}(n,k,m)$ denotes the family consisting of the first $m$ $k$-subsets of $[n]$ in colexicographic order.
\end{theorem}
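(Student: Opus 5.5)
The plan is the standard compression argument: first reduce to shifted families, then induct on $k$ and $m$ by splitting according to the element $1$. The bookkeeping uses the $k$-cascade representation of $m$,
$$m=\binom{a_k}{k}+\binom{a_{k-1}}{k-1}+\cdots+\binom{a_t}{t},\qquad a_k>a_{k-1}>\cdots>a_t\ge t\ge 1.$$
For the colex initial segment one checks directly that
$$|\partial\mathcal{C}(n,k,m)|=\binom{a_k}{k-1}+\cdots+\binom{a_t}{t-1}=:\partial_k(m).$$
So it suffices to prove $|\partial\mathcal{F}|\ge\partial_k(m)$ for every $\mathcal{F}\subseteq\binom{[n]}{k}$ with $|\mathcal{F}|=m$.

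\emph{Step 1 (shifting).} For $i<j$ let $S_{ij}$ be the usual shift, which replaces $j$ by $i$ in a set whenever the result is not already in the family. The standard fact is $\partial S_{ij}(\mathcal{F})\subseteq S_{ij}(\partial\mathcal{F})$. Hence shifting preserves $|\mathcal{F}|$ and does not increase $|\partial\mathcal{F}|$. Iterating until stable, we may assume $\mathcal{F}$ is shifted.

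\emph{Step 2 (splitting).} Put $\mathcal{F}(1)=\{F\setminus\{1\}:1\in F\in\mathcal{F}\}\subseteq\binom{[2,n]}{k-1}$ and $\mathcal{F}(\bar1)=\{F\in\mathcal{F}:1\notin F\}$.
\begin{itemize}
\item Shiftedness gives $\partial\mathcal{F}(\bar1)\subseteq\mathcal{F}(1)$: if $G\subset F\in\mathcal{F}(\bar1)$ with $F=G\cup\{x\}$, then $G\cup\{1\}\in\mathcal{F}$.
\item Consequently $\partial\mathcal{F}$ contains the disjoint union of $\mathcal{F}(1)$ (the sets avoiding $1$) and $\{\{1\}\cup H: H\in\partial\mathcal{F}(1)\}$. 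Therefore
$$|\partial\mathcal{F}|\ge|\mathcal{F}(1)|+|\partial\mathcal{F}(1)|.$$
\end{itemize}

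\emph{Step 3 (key lower bound).} I claim
$$|\mathcal{F}(1)|\ge m':=\binom{a_k-1}{k-1}+\cdots+\binom{a_t-1}{t-1}.$$
Suppose instead $|\mathcal{F}(1)|<m'$. Then $|\mathcal{F}(\bar1)|=m-|\mathcal{F}(1)|>\binom{a_k-1}{k}+\cdots+\binom{a_t-1}{t}$. By induction on $m$ (or on $n$, since $\mathcal{F}(\bar1)$ lives on $[2,n]$) together with monotonicity of $\partial_k$, we get $|\partial\mathcal{F}(\bar1)|\ge m'$. This contradicts $\partial\mathcal{F}(\bar1)\subseteq\mathcal{F}(1)$. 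One must check the degenerate case in which the displayed sum for $\mathcal{F}(\bar1)$ is not a genuine cascade (e.g.\ $a_t-1<t$). This is handled by the convention $\binom{x}{y}=0$ for $x<y$ and by the fact that $\partial_k$ is non-decreasing.

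\emph{Step 4 (conclusion).} By induction on $k$, $|\partial\mathcal{F}(1)|\ge\partial_{k-1}(|\mathcal{F}(1)|)$. The function $x\mapsto x+\partial_{k-1}(x)$ is non-decreasing, so it can be evaluated at $x=m'$. This gives
$$|\partial\mathcal{F}|\ge\sum_i\binom{a_i-1}{i-1}+\sum_i\binom{a_i-1}{i-2}=\sum_i\binom{a_i}{i-1}=\partial_k(m),$$
which is the claim.

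I expect the main obstacle to be the cascade arithmetic, not the combinatorics. It consists of two facts: that $\partial_k$ is monotone, and that the strict inequality in Step 3 really forces $|\partial\mathcal{F}(\bar1)|\ge m'$. I would first prove $\partial_k(m+1)\ge\partial_k(m)$ directly from the colex description, since the colex initial segments are nested and hence so are their shadows. Everything else can then be derived from that monotonicity.
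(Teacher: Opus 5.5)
\textbf{Context.} The paper gives no proof of this statement. It quotes Kruskal--Katona as a classical tool, used only inside Lemma~\ref{main:lem}, so there is no argument of the paper's to compare against.

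\textbf{What is correct.} Judged on its own, your outline is the standard shifting proof, and most of it is fine:
\begin{itemize}
\item Step~1, the inclusion $\partial\mathcal{F}(\bar1)\subseteq\mathcal{F}(1)$, and the bound $|\partial\mathcal{F}|\ge|\mathcal{F}(1)|+|\partial\mathcal{F}(1)|$ are correct.
\item Monotonicity of $\partial_k$ follows correctly from the nesting of colex initial segments.
\item Step~4 is correct. For $t=1$ the sum $m'$ is not a genuine $(k-1)$-cascade, but monotonicity still gives $\partial_{k-1}(m')\ge\partial_{k-1}(m'-1)=\sum_{i\ge2}\binom{a_i-1}{i-2}$, which is all you need.
\end{itemize}

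\textbf{The gap.} The degenerate case of Step~3, $a_t=t$, does not work as you justified it. Evaluating at $m-m'$ with the convention $\binom{x}{y}=0$ and then invoking monotonicity falls short of $m'$. For example, take $k=2$ and $m=4=\binom32+\binom11$. Then $m'=\binom21+\binom00=3$ and $m-m'=1$, but $\partial_2(1)=2<3$.

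In general, since $a_j-j$ is non-decreasing in $j$, the indices with $a_j=j$ form an initial run $t,\dots,t+r$. Put $j_0=t+r+1$. Then $m-m'=\sum_{j\ge j_0}\binom{a_j-1}{j}$ is a genuine cascade, and $\partial_k(m-m')=m'-(r+1)$, which is too small. So the strict inequality $|\mathcal{F}(\bar1)|\ge m-m'+1$ is essential, and you need an extra computation.

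\textbf{The fix.} The number $m-m'+1=\sum_{j\ge j_0}\binom{a_j-1}{j}+\binom{j_0-1}{j_0-1}$ is again a genuine cascade, because $a_{j_0}-1\ge j_0$ and $j_0\ge2$. Hence
\[
\partial_k(m-m'+1)=\partial_k(m-m')+(j_0-1)=m'-(r+1)+(t+r)=m'+t-1\ge m'.
\]
If instead every $a_j=j$, then $m=m'=k-t+1$, and $\partial_k(1)=k\ge m'$ settles it.

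This step uses $t\ge1$ and does not follow from monotonicity alone, contrary to your remark that everything else derives from monotonicity. Once you insert it, Step~3 is complete, and with it the whole induction on $k$ and $n$.
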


\emph{Shifting} (also known as compression) is a useful combinatorial tool. Given a $k$-uniform family $\mathcal{F} \subseteq \binom{[n]}{k}$ and two indices $i, j$ with $i < j$, the shift operation $\mathcal{S}_{i,j}$ transforms $\mathcal{F}$ by replacing every set $A \in \mathcal{F}$ with
$$
\mathcal{S}_{i,j}(A) =
\begin{cases}
(A \setminus \{j\}) \cup \{i\}, & \text{if } i \notin A,\; j \in A,\; \text{and } (A \setminus \{j\}) \cup \{i\} \notin \mathcal{F},\\
A, & \text{otherwise}.
\end{cases}
$$
Applying $\mathcal{S}_{i,j}$ to every member of $\mathcal{F}$ yields a new family $\mathcal{S}_{i,j}(\mathcal{F})$.

In \cite{huang2026ell}, Huang and Rao introduced the concept of $\ell$-shifted families. A family $\mathcal{F}$ is called \emph{$\ell$-shifted} if it is invariant under all shifts with $i \in [\ell]$ and $j \notin [\ell]$; that is, $\mathcal{S}_{i,j}(\mathcal{F}) = \mathcal{F}$ for every pair $(i,j)$ satisfying $i \in [\ell],\; j \notin [\ell]$.

Here are some basic properties for  an $\ell$-shifted family.
\begin{lemma}\label{ell-shifted}
Let $\mathcal{F}\subset \binom{[n]}{k}$ be an $\ell$-shifted $k$-uniform family with matching number $\nu(\mathcal{F})<s$. Define the $($not necessarily uniform$)$ family $\mathcal{G}$ on $[\ell]$ by
$$
\mathcal{G} = \{A \cap [\ell] \mid A \in \mathcal{F}\}.
$$
Then the following statements hold.
\begin{enumerate}[{\rm (1)}]
    \item $\mathcal{G}$ is upward closed among subsets of $[\ell]$ of size at most $k$, meaning that for all $A \subset B \subset [\ell]$, if $A \in \mathcal{G}$ and $|B| \leq k$, then $B \in \mathcal{G}$.
    \item If $\ell\geq sk$, then $\nu(\mathcal{G})<s$.
\end{enumerate}
\end{lemma}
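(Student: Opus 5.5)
For (1), we take $A\in\mathcal{G}$ and $A\subset B\subset[\ell]$ with $|B|\le k$. It suffices to handle $B=A\cup\{i\}$ with $i\in[\ell]\setminus A$ and then iterate. Pick $F\in\mathcal{F}$ with $F\cap[\ell]=A$. Since $|A|<|B|\le k=|F|$, the set $F\setminus[\ell]$ is nonempty; choose $j\in F\setminus[\ell]$. Because $i\in[\ell]$, $j\notin[\ell]$, $i\notin F$ and $j\in F$, invariance under $\mathcal{S}_{i,j}$ forces $F'=(F\setminus\{j\})\cup\{i\}\in\mathcal{F}$. Otherwise $\mathcal{S}_{i,j}$ would move $F$ and the family would change, since the image is not already in $\mathcal{F}$. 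Then $F'\cap[\ell]=A\cup\{i\}=B$, so $B\in\mathcal{G}$. Induction on $|B\setminus A|$ finishes (1).

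For (2), we argue by contradiction. Suppose $\ell\ge sk$ and $G_1,\dots,G_s\in\mathcal{G}$ are pairwise disjoint. By (1) we may enlarge the $G_i$ without losing membership. Each $|G_i|\le k$, so $\sum|G_i|\le sk\le\ell$. Hence there is room to extend each $G_i$ inside $[\ell]$ to a $k$-set $B_i$ so that $B_1,\dots,B_s$ remain pairwise disjoint: distribute the unused elements of $[\ell]$ greedily. By (1) each $B_i\in\mathcal{G}$, since $|B_i|=k$. A $k$-set $B_i\in\mathcal{G}$ means there is $F_i\in\mathcal{F}$ with $F_i\cap[\ell]=B_i$. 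As $|F_i|=k$, this gives $F_i=B_i$. Thus $B_1,\dots,B_s$ are $s$ pairwise disjoint members of $\mathcal{F}$, contradicting $\nu(\mathcal{F})<s$.

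The only subtle point is the extension step in (2). We need enough elements in $[\ell]$ to pad every $G_i$ to size exactly $k$ while keeping the sets disjoint, and that is precisely where $\ell\ge sk$ is used. Padding to size $k$ matters because it forces the witnessing edges of $\mathcal{F}$ to lie entirely inside $[\ell]$, so their disjointness is inherited directly from that of the $B_i$.
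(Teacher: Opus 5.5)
Your proposal is correct and follows the paper's proof essentially verbatim. Part (1) uses invariance under a single shift $\mathcal{S}_{i,j}$ with $i\in[\ell]\setminus A$ and $j\in F\setminus[\ell]$, then iterates; part (2) pads the disjoint traces to disjoint $k$-subsets of $[\ell]$ (you do this all at once, the paper one set at a time), which by (1) lie in $\mathcal{G}$ and hence in $\mathcal{F}$, and your remark that a $k$-set trace must equal its witnessing edge is exactly the step the paper leaves implicit.
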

\begin{proof}
(1) Suppose $ G \in \mathcal{G} $ and $|G| < k$. Then there exists $ F \in \mathcal{F} $ such that $ F \cap [\ell] = G $.
For any $ j \in [\ell] \setminus G $, pick an element $ i \in F \setminus G $ (note that $ i \notin [\ell] $ because $ F \cap [\ell] = G $). By invariance under the shift $\mathcal{S}_{j,i}$, we obtain

$$
F' := (F \cup \{j\}) \setminus \{i\} \in \mathcal{F}.
$$

Consequently, $ F' \cap [\ell] = (F \cup \{j\}) \cap [\ell] = G \cup \{j\} $. Hence $ G \cup \{j\} \in \mathcal{G} $ for every $ j \in [\ell] \setminus G $, which shows that $ \mathcal{G} $ is upward closed with respect to inclusion for all sets of size strictly less than $ k $.

(2) Suppose $\{G_1, G_2,\ldots,G_s\} \subseteq \mathcal{G}$ is a matching of size $s$. Since $\ell \ge sk$, we have
$$
\left|[\ell]\setminus \bigcup_{i=2}^{s} G_i\right| \ge sk - (s-1)k = k.
$$
Thus we can choose $F_1 \in \binom{[\ell]}{k}$ such that $G_1 \subseteq F_1$ and $F_1\cap (\cup_{i=2}^{s} G_i)=\emptyset$. By (1), $F_1 \in \mathcal{G}$, and hence $F_1 \in \mathcal{F}$.
Assume we have obtained $\{F_1,F_2,\ldots,F_m\}\subseteq \mathcal{F}$ which is a matching of size $m$, where $1 \le m \le s-1$. Since $\ell \ge sk$, we have
$$
\left|[\ell]\setminus \left(\bigcup_{i=1}^{m} F_i \cup \bigcup_{i=m+2}^{s} G_i\right)\right| \ge sk - (s-1)k = k.
$$
Thus, we can choose $F_{m+1} \in \binom{[\ell]}{k}$ such that $G_{m+1} \subseteq F_{m+1}$ and $F_{m+1}\cap \left(\bigcup_{i=1}^{m} F_i \cup \bigcup_{i=m+2}^{s} G_i\right)=\emptyset$. By (1), $F_{m+1} \in \mathcal{G}$, and hence $F_{m+1} \in \mathcal{F}$.

Finally, we obtain $\{F_1,F_2,\ldots,F_s\}\subseteq \mathcal{F}$ which is a matching of size $s$, a contradiction with $\nu(\mathcal{F}) < s$.
\end{proof}

 Let $\mathcal{G}$ be a (not necessarily uniform) family of subsets of $[n]$. Define $$\mathcal{G}_i^{(m)}=\{A\setminus\{i\}: i \in A\in \mathcal{G}, \, |A| = m\}$$
 For $i \in [n]$, the $m$-degree of $i$ in $\mathcal{G}$, denoted by $d_i^{(m)}(\mathcal{G})$, is the number of sets in $\mathcal{G}$ of size $m$ that contain $i$, that is
$$
d_i^{(m)}(\mathcal{G}) := |\mathcal{G}_i^{(m)}|.
$$
\begin{lemma}\label{main:lem}
Let $k\ge 2$ and $s\ge 1$ be integers. There exists a constant $n_0=n_0(k)$ such that the following holds. Assume either $n \geq \max\{2sk,n_0\}$ or $n\geq 3k^2 s$, and let $\mathcal{G}\subseteq 2^{[n]}$ be a {\rm(}not necessarily uniform{\rm)} family with $\emptyset\notin\mathcal{G}$ and $\nu(\mathcal{G})<s$. Then there exists an element $i \in [n]$ such that for every $m \leq k$,
$$
d_i^{(m)}(\mathcal{G}) \leq \binom{n-1}{m-1}-\binom{n-s}{m-1}.
$$
\end{lemma}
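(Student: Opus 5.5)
We argue by contradiction. Suppose that for every $i\in[n]$ there is some $m_i\le k$ with $d_i^{(m_i)}(\mathcal{G})>\binom{n-1}{m_i-1}-\binom{n-s}{m_i-1}$. Note that $m_i\ge 2$ is forced when $s\ge 2$, since for $m=1$ the right-hand side is $0$ only if $s=1$. The case $s=1$ is immediate: $\nu(\mathcal{G})<1$ means $\mathcal{G}=\emptyset$, so every degree is $0$. So we assume $s\ge 2$.

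We partition $[n]$ into classes $V_m=\{i: m_i=m\}$, where $m=2,\dots,k$, choosing for instance the smallest such $m$ for each $i$. Since there are at most $k-1$ classes, some class $V_m$ satisfies $|V_m|\ge n/(k-1)$. Let $\mathcal{G}^{(m)}$ denote the $m$-uniform layer of $\mathcal{G}$. Then $\nu(\mathcal{G}^{(m)})\le\nu(\mathcal{G})<s$, and every vertex of $V_m$ has degree in $\mathcal{G}^{(m)}$ exceeding $\binom{n-1}{m-1}-\binom{n-s}{m-1}$. This reduces the problem to a single uniformity $m$, with a large set of vertices whose degree is above the Bollob\'as--Daykin--Erd\H{o}s threshold.

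For the uniform problem, the plan is a greedy/absorbing matching argument in the style of Huang--Zhao and Guo--Lu--Jiang. Take a maximum matching $M=\{E_1,\dots,E_t\}$ with $t\le s-1$, and put $U=\bigcup E_j$, so $|U|\le (s-1)m$. Every edge meets $U$. Since $n$ is large compared with $|U|$ (either $n\ge 2sk$ or $n\ge 3k^2s$), the set $V_m\setminus U$ is nonempty; in fact we want to pick many vertices $x\in V_m\setminus U$. For each such $x$, the degree condition says that the link of $x$ is not covered by $s-1$ vertices: at most $\binom{n-1}{m-1}-\binom{n-s}{m-1}$ sets through $x$ meet a fixed $(s-1)$-set. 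The standard argument then counts, for every edge $E_j$, the vertices $x$ that have two disjoint link sets hitting $E_j$ in a way that allows augmentation. With $m$ (or more) such vertices one can replace $E_j$ by two disjoint edges, which contradicts the maximality of $M$.

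The quantitative form is as follows. The number of link sets of $x$ meeting $U$ in at least two vertices is at most $\binom{|U|}{2}\binom{n}{m-3}$, which is a lower-order term when $n\gg$ (or under the $3k^2s$ bound). Hence $x$ has more than roughly $\binom{n-1}{m-1}-\binom{n-s}{m-1}-O(s^2m^2n^{m-3})$ link sets that meet $U$ in exactly one vertex. Since $s-1$ vertices of $U$ can only cover $\binom{n-1}{m-1}-\binom{n-s}{m-1}$, the link of $x$ must hit at least $s$ distinct vertices of $U$ with many sets each. By pigeonhole, some edge $E_j$ then has two of its vertices used by many $x$'s. Two such $x,x'$ yield disjoint edges $\{x\}\cup A$ and $\{x'\}\cup A'$ inside $(V\setminus U)\cup E_j$, and this augments $M$.

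The main obstacle is making this counting work in the linear regime $n\ge\max\{2sk,n_0(k)\}$ rather than $n\ge 3k^2s$. There the error terms are not negligible, so we will need the sharper structure from the Guo--Lu--Jiang proof of Theorem~\ref{degree mat2}, applied to $\mathcal{G}^{(m)}$ restricted to the large vertex class $V_m$. We would first try to invoke that theorem's argument directly, noting that it only uses the degree condition on vertices outside a maximum matching. We also need to check that $|V_m|\ge n/(k-1)$ leaves enough such vertices, perhaps by instead choosing $m$ as the class containing the most vertices outside $U$.
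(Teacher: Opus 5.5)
\textbf{There is a genuine gap: your reduction to a single layer (for $k\ge3$) needs a theorem that is not available.} For $k=2$ your pigeonhole together with Lemma~\ref{str1} does go through.

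After pigeonholing, you only know that the vertices of one class $V_m$, about $n/(k-1)$ of them, have $m$-degree above $\binom{n-1}{m-1}-\binom{n-s}{m-1}$. Theorems~\ref{degree mat} and~\ref{degree mat2} are minimum-degree statements: they need \emph{every} vertex above the threshold, so they say nothing here. What you need is a ``few high-degree vertices'' theorem, and a very strong one.
\begin{itemize}
\item For $n=2sk$ you would need fewer than $2sk/(k-1)=2s+2s/(k-1)$ vertices above the threshold.
\item Theorem~\ref{thm:main}, which the paper deduces from this very lemma, only gives at most $2sm-1$.
\item The family $\mathcal{G}_1\cup\mathcal{G}_2$ in the remark after Theorem~\ref{thm:main2}, built with uniformity $m$, already has $m+2s-3$ such vertices for every $n$.
\item A nearly optimal count of this kind is not known in the linear range; Theorem~\ref{thm:main2} needs $n\ge n_0(k,s)$.
\end{itemize}
Your augmentation sketch does not supply this count. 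You concede that the error terms are not negligible when $n$ is linear in $sk$. Invoking the Guo--Lu--Jiang argument ``for vertices outside a maximum matching'' would still need the degree condition on all of those vertices, not on a $1/(k-1)$ fraction. In the $3k^2s$ regime the counting is never actually carried out either.

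A smaller slip: for $m=1$ the bound is $\binom{n-1}{0}-\binom{n-s}{0}=0$ for every $s$. So each $i$ with $\{i\}\in\mathcal{G}$ fails at $m=1$, and $m_i\ge 2$ is not forced. There are at most $s-1$ such $i$, so this is harmless.

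\textbf{The missing idea is to tie the layers together so that one vertex works for all $m$.} This is what lets the minimum-degree theorems suffice.
\begin{enumerate}
\item Replace $\mathcal{G}$ by its upward closure $\mathcal{F}$. It still has $\nu(\mathcal{F})<s$, and each $d_i^{(m)}$ can only increase.
\item The top layer $\mathcal{F}\cap\binom{[n]}{k}$ has a vertex $i_0$ of degree at most $\binom{n-1}{k-1}-\binom{n-s}{k-1}$. This comes from Theorem~\ref{degree mat2} or~\ref{degree mat}, or Lemma~\ref{str1} if $k=2$.
\item Go down one layer at a time. Suppose the $(m-1)$-link of $i_0$ had more than $\binom{n-1}{m-2}-\binom{n-s}{m-2}$ members. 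By upward closure, the shadow of their complements in $[n]\setminus\{i_0\}$ injects into the $m$-link of $i_0$.
\item Apply Kruskal--Katona. The first $\binom{n-1}{m-2}-\binom{n-s}{m-2}$ sets in colex order are exactly those not containing the $(s-1)$-set of largest elements. So the $m$-link would have more than $\binom{n-1}{m-1}-\binom{n-s}{m-1}$ members, contradicting the previous step.
\end{enumerate}
No pigeonhole over layers and no multi-vertex degree theorem is needed.
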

\begin{proof}
Let
$$
\mathcal{F}= \{F\subseteq[n]:G\subseteq F\text{ for some }G\in\mathcal{G}\}
$$
be the upward closure of $\mathcal{G}$. Since $\emptyset\notin\mathcal{G}$, any matching in $\mathcal{F}$ contains pairwise disjoint nonempty members of $\mathcal{G}$, one contained in each of its members. Hence $\nu(\mathcal{F})<s$. Moreover, $d_i^{(m)}(\mathcal{F})\geq d_i^{(m)}(\mathcal{G})$ for all $1\leq m\leq k$ and $1\leq i\leq n$. Thus, it suffices to show that there exists an element $i \in [n]$ such that for every $m \leq k$,
$$
d_i^{(m)}(\mathcal{F}) \leq \binom{n-1}{m-1}-\binom{n-s}{m-1}.
$$
We proceed by downward induction on $m$.

\noindent\textbf{Induction base:} $m=k$. If $k=2$, the conclusion follows from the elementary graph fact in Lemma~\ref{str1}. If $k\geq3$, apply Theorem~\ref{degree mat2} under the first alternative on $n$, and Theorem~\ref{degree mat} under the second alternative, to the $k$-uniform layer $\mathcal{F}^{(k)}=\mathcal{F}\cap\binom{[n]}{k}$. Since $\nu(\mathcal{F}^{(k)})<s$, in either case there exists $i_0\in[n]$ such that
$$
d_{i_0}^{(k)}(\mathcal{F}) \leq \binom{n-1}{k-1}-\binom{n-s}{k-1}.
$$

\noindent\textbf{Induction step:} Assume that the inequality holds for some $m$ with $2\leq m\leq k$, i.e.
$$
d_{i_0}^{(m)}(\mathcal{F}) \leq \binom{n-1}{m-1}-\binom{n-s}{m-1}.
$$
We show that it then holds for $m-1$. Suppose, to the contrary, that
$$
d_{i_0}^{(m-1)}(\mathcal{F}) \geq \binom{n-1}{m-2}-\binom{n-s}{m-2}+1.
$$
Put $V=[n]\setminus\{i_0\}$ and
$$
\mathcal{A}=\mathcal{F}_{i_0}^{(m-1)}\subseteq\binom{V}{m-2},
\qquad
\mathcal{A}^{c}=\{V\setminus A:A\in\mathcal{A}\}\subseteq\binom{V}{n-m+1},
$$
where complements are taken relative to $V$. If $D\in\partial\mathcal{A}^{c}$, choose $C\in\mathcal{A}^{c}$ with $D\subset C$. Then $V\setminus C\in\mathcal{A}$ and $V\setminus C\subset V\setminus D$. Since $\mathcal{F}$ is upward closed, $V\setminus D\in\mathcal{F}_{i_0}^{(m)}$. Consequently,
$$
d_{i_0}^{(m)}(\mathcal{F})\geq |\partial\mathcal{A}^{c}|.
$$

Relabel $V$ as $[n-1]$, set $r=n-m+1$, and let $T=[n-s+1,n-1]$, so $|T|=s-1$. The number of $r$-subsets of $[n-1]$ that do not contain $T$ is
$$
M=\binom{n-1}{m-2}-\binom{n-s}{m-2}.
$$
These sets form the first $M$ members of the colexicographic order: if an $r$-set $A$ misses an element of $T$ and an $r$-set $B$ contains $T$, then the largest element of $A\triangle B$ belongs to $B$. Among the $r$-sets containing $T$, the first one in colexicographic order is
$$
Q_0=[n-m-s+2]\cup T.
$$
Let $\mathcal{C}$ be this colex initial segment of length $M+1$. For $m\geq3$, every $(r-1)$-set not containing $T$ belongs to $\partial\mathcal{C}$: if such a set $D$ misses $t\in T$, then $|[n-1]\setminus D|=m-1\geq2$, so one can choose $x\in[n-1]\setminus(D\cup\{t\})$, and $D\cup\{x\}$ is one of the first $M$ members of $\mathcal{C}$. For $m=2$, we have $M=0$ and $Q_0=[n-1]$, whose shadow contains all $(r-1)$-sets. In addition,
$$
Q_0\setminus\{n-m-s+2\}=[n-m-s+1]\cup T
$$
belongs to $\partial\mathcal{C}$ and contains $T$. Hence
\begin{align*}
|\partial\mathcal{C}|
&\geq \binom{n-1}{r-1}-\binom{n-s}{r-s}+1\\
&=\binom{n-1}{m-1}-\binom{n-s}{m-1}+1.
\end{align*}
Since $|\mathcal{A}^{c}|\geq M+1$, the Kruskal--Katona theorem and the nesting of colex initial segments give
$$
|\partial\mathcal{A}^{c}|\geq |\partial\mathcal{C}|.
$$
Therefore,
$$
d_{i_0}^{(m)}(\mathcal{F})\geq \binom{n-1}{m-1}-\binom{n-s}{m-1}+1,
$$
contradicting the induction hypothesis. Thus,
$$
d_{i_0}^{(m-1)}(\mathcal{F}) \leq \binom{n-1}{m-2}-\binom{n-s}{m-2}.
$$
This completes the downward induction.
\end{proof}
\section{The $2sk$-th largest degree}\label{2sk}
Now, we are ready to prove Theorem \ref{thm:main} and Corollary \ref{cor}.

\noindent\textbf{Proof of Theorem \ref{thm:main}}
If $s=1$, then $\mathcal{F}=\emptyset$ and the conclusion is immediate. Hence assume $s\geq2$. Let $\mathcal{F}_0$ denote the original family, with the vertices labelled so that
$d_1(\mathcal{F}_0)\geq\cdots\geq d_n(\mathcal{F}_0)$.

Apply shifts $\mathcal{S}_{i,j}$ with $1\leq i\leq 2sk<j\leq n$ until no such shift changes the family, and denote the resulting family by $\mathcal{F}$. A shift does not increase the matching number. For such a shift, the degree of its target vertex in $[2sk]$ does not decrease, and every other vertex of $[2sk]$ remains in each shifted edge in which it originally occurred. Hence the degree of every vertex in $[2sk]$ does not decrease during this procedure. Thus
$$
\nu(\mathcal{F})<s
\qquad\text{and}\qquad
d_i(\mathcal{F})\geq d_i(\mathcal{F}_0)\quad(i\in[2sk]).
$$
The family $\mathcal{F}$ is $2sk$-shifted.

Define $\mathcal{G}=\{F\cap[2sk]:F\in\mathcal{F}\}$. By Lemma~\ref{ell-shifted}, $\mathcal{G}$ is an upward closed family on $[2sk]$, every member has size at most $k$, and $\nu(\mathcal{G})<s$. In particular, $\emptyset\notin\mathcal{G}$, since otherwise upward closure would put every $k$-subset of $[2sk]$ in $\mathcal{G}$; each such trace is itself a member of $\mathcal{F}$, giving $s$ pairwise disjoint members of $\mathcal{F}$. Applying Lemma~\ref{main:lem} with $n=2sk$, we obtain $i\in[2sk]$ such that for every $m\leq k$,
$$
d_i^{(m)}(\mathcal{G})\leq \binom{2sk-1}{m-1}-\binom{2sk-s}{m-1}.
$$

For a fixed $A\in\mathcal{G}^{(m)}_i$, there are at most $\binom{n-2sk}{k-m}$ sets $B\in\mathcal{F}$ satisfying $B\cap[2sk]=A$. Consequently,
\begin{align*}
d_i(\mathcal{F})
&\leq \sum_{m=1}^k d_i^{(m)}(\mathcal{G})\binom{n-2sk}{k-m}\\
&\leq \sum_{m=1}^k \left(\binom{2sk-1}{m-1}-\binom{2sk-s}{m-1}\right)\binom{n-2sk}{k-m}\\
&=\binom{n-1}{k-1}-\binom{n-s}{k-1},
\end{align*}
where the last equality is Vandermonde's identity. Since $i\in[2sk]$,
$$
d_{2sk}(\mathcal{F}_0)\leq d_i(\mathcal{F}_0)\leq d_i(\mathcal{F})
\leq\binom{n-1}{k-1}-\binom{n-s}{k-1}.
$$
This completes the proof.

\noindent\textbf{Proof of Remark \ref{rem}}
If $s=1$, the conclusion is immediate. Assume $s\geq2$, and let $\mathcal{F}_0$ be the original family with non-increasing degree sequence. Apply all shifts $\mathcal{S}_{i,j}$ with $1\leq i\leq 3k^2s<j\leq n$, and denote the resulting $3k^2s$-shifted family by $\mathcal{F}$. As in the proof of Theorem~\ref{thm:main},
$$
\nu(\mathcal{F})<s
\qquad\text{and}\qquad
d_i(\mathcal{F})\geq d_i(\mathcal{F}_0)\quad(i\in[3k^2s]).
$$
Define $\mathcal{G}=\{F\cap[3k^2s]:F\in\mathcal{F}\}$. By Lemma~\ref{ell-shifted}, $\mathcal{G}$ is upward closed and has matching number less than $s$; by the same argument as above, it does not contain the empty set. Applying Lemma~\ref{main:lem} with $n=3k^2s$, we obtain $i\in[3k^2s]$ such that for every $m\leq k$,
$$
d_i^{(m)}(\mathcal{G})\leq \binom{3k^2s-1}{m-1}-\binom{3k^2s-s}{m-1}.
$$
Thus,
\begin{align*}
d_i(\mathcal{F})
&\leq \sum_{m=1}^k d_i^{(m)}(\mathcal{G})\binom{n-3k^2s}{k-m}\\
&\leq \sum_{m=1}^k \left(\binom{3k^2s-1}{m-1}-\binom{3k^2s-s}{m-1}\right)\binom{n-3k^2s}{k-m}\\
&=\binom{n-1}{k-1}-\binom{n-s}{k-1}.
\end{align*}
Since $i\in[3k^2s]$, comparison with the original degrees gives the required bound on $d_{3k^2s}(\mathcal{F}_0)$.
\hfill\qed

\noindent\textbf{Proof of Corollary \ref{cor}}
Assume $ \mathcal{F} \subseteq \binom{[n]}{k} $ is a family with matching number $ \nu(\mathcal{F}) < s $. It suffices to show that there exists a set $ F \notin \mathcal{F} $ such that $ \deg(F) \leq k \left( \binom{n-1}{k-1} - \binom{n-s}{k-1} \right) $.

By Theorem \ref{thm:main}, we have $ d_i(\mathcal{F}) \leq \binom{n-1}{k-1} - \binom{n-s}{k-1} $ for any $ i \in [2sk, n] $. Note that $ \nu\left( \binom{[2sk,n]}{k} \right) \geq s $, since $ n \geq 3sk $. Hence there exists some $ F \in \binom{[2sk,n]}{k} \setminus \mathcal{F} $; otherwise $ \nu(\mathcal{F}) \geq \nu\left( \binom{[2sk,n]}{k} \right) \geq s $, a contradiction. For such an $ F $, the inequality $$ \deg(F) \leq k \left( \binom{n-1}{k-1} - \binom{n-s}{k-1} \right) $$ implies that $ \sigma_k(\mathcal{F}) \leq k \left( \binom{n-1}{k-1} - \binom{n-s}{k-1} \right)  $, a contradiction.
\hfill\qed

\section{Tight bound for large $n$}\label{large n}
\subsection{Sunflower base}
The \emph{$\Delta$-system method} (also known as the \emph{sunflower method}) was introduced by Erd\H{o}s and Rado in 1960 \cite{erdos1960intersection}. They proved a Ramsey-type result stating that any sufficiently large family of sets of bounded size contains a homogeneous substructure called a \emph{$\Delta$-system} or \emph{sunflower}, i.e., a collection of sets in which every two intersect in the same common set (the kernel). This result has become a fundamental tool in extremal set theory.

The core idea of the method is to construct a structured \emph{base} that covers most of the original family, allowing one to bound the size of the family using the properties of the base. Deza, Erd\H{o}s and Frankl \cite{DEF78} first applied this method systematically to study families with prescribed intersection sizes. Subsequently, Frankl and F\H{u}redi \cite{FF85, FF87} developed the method further, solving the ``forbidden one intersection'' problem and various Tur\'an-type problems. The $\Delta$-system method has found applications not only in extremal set theory but also in computational complexity (e.g., the works of Razborov and Alon--Boppana \cite{AB87}) and in the study of hypergraph Tur\'an problems.

\begin{defin}[$\Delta$-system]
$\{F_1,F_2,\ldots,F_m\}\subseteq \binom{[n]}{k}$ is called a $\Delta$-system of cardinality $m$ with kernel $K$, if $K=F_i\cap F_j$ for any $1\leq i< j\leq m$. We also call a $\Delta$-system of size $m$ as an $m$-sunflower.
\end{defin}

\begin{defin}[$\Delta$-base] Set
	$\mathcal{B}_{s,k}(\mathcal{F})=\mathcal{F}\cup \{E\in 2^{[n]}:|E|>0,$ there exists a $\Delta$-system in $\mathcal{F}$ with kernel $E$ of cardinality $(sk-k+1)^{|E|}\}$.
	Let $\mathcal{B}_{s,k}^*(\mathcal{F})$ be the set of minimal elements in $\mathcal{B}_{s,k}(\mathcal{F})$ under inclusion. We call $\mathcal{B}_{s,k}^*(\mathcal{F})$ the $\Delta$-base of $\mathcal{F}$. Let
	$\mathcal{B}_{s,k}^{* (i)}(\mathcal{F})=\{F\in \mathcal{B}_{s,k}^*(\mathcal{F}): |F|=i\}$, $i\in [k]$.
\end{defin}

Notice the fact that $\mathcal{F}\subseteq \mathcal{B}_{s,k}(\mathcal{F})$, and then $\mathcal{F}\subseteq \cup_{B\in \mathcal{B}_{s,k}^*(\mathcal{F})}\{F\in\binom{[n]}{k}:B\subseteq F\}$. Here are some inequalities.

\begin{lemma}\label{upp1}
	Let $\mathcal{F}\subseteq\binom{[n]}{k}$ be a $k$-uniform family. Then we have
	\begin{align*}
		(1)~~|\mathcal{F}|\leq& \sum^k_{i=1}|\mathcal{B}_{s,k}^{* (i)}(\mathcal{F})|\binom{n-i}{k-i};\\
		(2)~~d_m(\mathcal{F})\leq& \sum^k_{i=1}|\mathcal{B}_{s,k}^{* (i)}(\mathcal{F},m)|\binom{n-i}{k-i}+\sum^k_{i=1}|\mathcal{B}_{s,k}^{* (i)}(\mathcal{F})\setminus \mathcal{B}_{s,k}^{* (i)}(\mathcal{F},m)|\binom{n-i-1}{k-i-1}
	\end{align*}
	for any $m\in [n]$, where $\mathcal{B}_{s,k}^{* (i)}(\mathcal{F},m)=\{F\in \mathcal{B}_{s,k}^{* (i)}(\mathcal{F}):m\in F\}$.
\end{lemma}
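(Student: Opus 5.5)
The argument uses only the covering property of the $\Delta$-base, namely
$$\mathcal{F}\subseteq\bigcup_{B\in\mathcal{B}_{s,k}^*(\mathcal{F})}\Big\{F\in\tbinom{[n]}{k}:B\subseteq F\Big\}.$$
We first justify this. Every $F\in\mathcal{F}$ lies in $\mathcal{B}_{s,k}(\mathcal{F})$. Since this family is finite, $F$ contains some inclusion-minimal element $B$ of $\mathcal{B}_{s,k}(\mathcal{F})$, and by definition $B\in\mathcal{B}_{s,k}^*(\mathcal{F})$. Every element of $\mathcal{B}_{s,k}(\mathcal{F})$ is nonempty and has size at most $k$: kernels are proper subsets of $k$-sets, and the members of $\mathcal{F}$ have size $k$. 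Hence every $B\in\mathcal{B}_{s,k}^*(\mathcal{F})$ satisfies $|B|=i$ for some $i\in[k]$, and $\mathcal{B}_{s,k}^*(\mathcal{F})$ is the disjoint union of the layers $\mathcal{B}_{s,k}^{*(i)}(\mathcal{F})$.

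\textbf{Part (1).} For each $F\in\mathcal{F}$, choose one base element $B(F)\subseteq F$. Group the members of $\mathcal{F}$ according to $B(F)$. A fixed $B$ of size $i$ is contained in exactly $\binom{n-i}{k-i}$ $k$-subsets of $[n]$, so at most that many $F$ are assigned to it. Summing over $i\in[k]$ and over $B\in\mathcal{B}_{s,k}^{*(i)}(\mathcal{F})$ gives (1).

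\textbf{Part (2).} Fix $m\in[n]$ and count the $F\in\mathcal{F}$ with $m\in F$. Each such $F$ contains some base element $B$, and we split into two cases.

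\emph{Case $m\in B$.} Then $B\in\mathcal{B}_{s,k}^{*(i)}(\mathcal{F},m)$, where $i=|B|$. The number of $k$-sets containing $B$ is $\binom{n-i}{k-i}$, and all of them contain $m$ automatically.

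\emph{Case $m\notin B$.} Then $F\supseteq B\cup\{m\}$, a set of size $i+1$. The number of such $k$-sets is $\binom{n-i-1}{k-i-1}$.

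When $i=k$ this case cannot occur, since $B=F$ would force $m\in B$. Correspondingly, the term is $\binom{n-k-1}{-1}=0$, which is consistent. A union bound over all base elements, split according to whether $m\in B$, gives exactly the stated sum.

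No step here is a genuine obstacle; the lemma is a direct counting consequence of the definition. The only point requiring care is the justification above that every member of $\mathcal{F}$ contains a minimal element of $\mathcal{B}_{s,k}(\mathcal{F})$ of size between $1$ and $k$, which is what makes the union bound run over $i\in[k]$ only.
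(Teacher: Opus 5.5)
Your proof is correct and is essentially the paper's argument: both use $\mathcal{F}\subseteq\mathcal{B}_{s,k}(\mathcal{F})$ to cover $\mathcal{F}$ by the $k$-sets containing minimal base elements. Both then count the $k$-sets through each base element, and for part (2) split according to whether $m\in B$. Your extra justification that each $F\in\mathcal{F}$ contains a minimal base element of size in $[k]$ only makes explicit what the paper leaves implicit, and the size bound is automatic anyway, since such an element lies inside the $k$-set $F$. The same goes for your remark that the $i=k$, $m\notin B$ term vanishes.
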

\begin{proof}
	Note that $\mathcal{F}\subseteq \mathcal{B}_{s,k}(\mathcal{F})$, and hence
	$$
	\mathcal{F}\subseteq \bigcup_{B\in \mathcal{B}_{s,k}^*(\mathcal{F})}\left\{F\in\binom{[n]}{k}:B\subseteq F\right\}.
	$$
	Counting all $k$-sets containing each minimal base gives (1). For (2), a base $B$ containing $m$ contributes at most $\binom{n-|B|}{k-|B|}$ sets to the degree of $m$, while a base $B$ not containing $m$ contributes at most $\binom{n-|B|-1}{k-|B|-1}$. Summing these bounds proves (2), with the convention that a binomial coefficient with negative lower index is zero.
\end{proof}

An important property of the $\Delta$-base is that it preserves the matching number of the original set family.
\begin{lemma}\label{mat for 2}
	Let $\mathcal{F}\subseteq\binom{[n]}{k}$ be a family with matching number $\nu(\mathcal{F})<s$. Then, $\nu(\mathcal{B}_{s,k}(\mathcal{F}))=\nu(\mathcal{B}^*_{s,k}(\mathcal{F}))<s$.
\end{lemma}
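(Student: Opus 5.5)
The plan is to prove two inequalities, $\nu(\mathcal{B}^*_{s,k}(\mathcal{F}))\le\nu(\mathcal{B}_{s,k}(\mathcal{F}))$ and $\nu(\mathcal{B}_{s,k}(\mathcal{F}))<s$, and then to check that equality holds.

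The first inequality is immediate, since $\mathcal{B}^*_{s,k}(\mathcal{F})\subseteq\mathcal{B}_{s,k}(\mathcal{F})$. For the reverse inequality, take any matching $E_1,\dots,E_t$ in $\mathcal{B}_{s,k}(\mathcal{F})$. Each $E_j$ contains a minimal element $E_j^*\in\mathcal{B}^*_{s,k}(\mathcal{F})$. Subsets of pairwise disjoint sets are pairwise disjoint, so the sets $E_1^*,\dots,E_t^*$ form a matching of the same size in the base. Hence the two matching numbers coincide, and it remains to show $\nu(\mathcal{B}_{s,k}(\mathcal{F}))<s$.

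\textbf{Replacement step.} Suppose $E_1,\dots,E_s$ are pairwise disjoint members of $\mathcal{B}_{s,k}(\mathcal{F})$. Among all such matchings, choose one that maximizes the number of members lying in $\mathcal{F}$. If every $E_j$ lies in $\mathcal{F}$, we already contradict $\nu(\mathcal{F})<s$. Otherwise some $E_j\notin\mathcal{F}$. By definition, $E_j$ is then a nonempty kernel of a sunflower $F_1,\dots,F_m$ in $\mathcal{F}$ with $m=(sk-k+1)^{|E_j|}\ge sk-k+1$, since $|E_j|\ge 1$.

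The petals $F_i\setminus E_j$ are pairwise disjoint. The union $U=\bigcup_{l\ne j}E_l$ of the other members has $|U|\le (s-1)k$, because every member of $\mathcal{B}_{s,k}(\mathcal{F})$ has size at most $k$ (kernels are subsets of $k$-sets). Each element of $U$ lies in at most one petal. Moreover, $U$ is disjoint from $E_j$, so a sunflower set $F_i$ meets $U$ only if its petal meets $U$. Thus at most $(s-1)k$ of the $F_i$ meet $U$. Since $m\ge (s-1)k+1$, some $F_i$ avoids $U$.

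Replace $E_j$ by this $F_i\in\mathcal{F}$. The result is still a matching of size $s$ in $\mathcal{B}_{s,k}(\mathcal{F})$, and it has one more member in $\mathcal{F}$. This contradicts the maximality of the choice. Equivalently, one can iterate the replacement at most $s$ times until all members lie in $\mathcal{F}$, which contradicts $\nu(\mathcal{F})<s$.

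\textbf{Main obstacle.} The only delicate point is the count in the replacement step. It needs the bound $|U|\le(s-1)k$, which uses that all base sets have size at most $k$. It also needs the sunflower size to be at least $(s-1)k+1$. This holds because $(sk-k+1)^{|E|}\ge sk-k+1$ for nonempty $E$, with equality only when $|E|=1$.
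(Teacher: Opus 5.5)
Your proof is correct and is essentially the paper's argument: you pass between $\mathcal{B}_{s,k}(\mathcal{F})$ and $\mathcal{B}^*_{s,k}(\mathcal{F})$ by shrinking each member to a minimal one, and then you replace each kernel not in $\mathcal{F}$ by a member of its $(sk-k+1)^{|E|}$-sunflower that avoids the at most $(s-1)k$ vertices of the other sets. The only cosmetic difference is that you package the replacements as an extremal choice (a matching maximizing the number of members in $\mathcal{F}$), whereas the paper makes the replacements one after another in a fixed order.
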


\begin{proof}
Every member of $\mathcal{B}_{s,k}(\mathcal{F})$ contains a minimal member of $\mathcal{B}_{s,k}(\mathcal{F})$. Therefore, a matching in $\mathcal{B}_{s,k}(\mathcal{F})$ can be replaced, member by member, by a matching of the same size in $\mathcal{B}_{s,k}^*(\mathcal{F})$. Since $\mathcal{B}_{s,k}^*(\mathcal{F})\subseteq\mathcal{B}_{s,k}(\mathcal{F})$, we have
$$
\nu(\mathcal{B}_{s,k}(\mathcal{F}))=\nu(\mathcal{B}_{s,k}^*(\mathcal{F})).
$$

Suppose, for a contradiction, that $B_1,\ldots,B_s\in\mathcal{B}_{s,k}^*(\mathcal{F})$ are pairwise disjoint. Put $p=sk-k+1=k(s-1)+1$. Without loss of generality, assume $B_i\in\mathcal{F}$ for $i\in[t]$ and $B_i\notin\mathcal{F}$ for $i\in[t+1,s]$, where $0\leq t<s$. For each $i\in[t+1,s]$, choose a $p$-sunflower $\mathcal{F}_i\subseteq\mathcal{F}$ with kernel $B_i$; such a sunflower exists because the definition provides at least $p^{|B_i|}\geq p$ petals.

For $i\in[t]$, set $F_i=B_i$. We choose $F_i\in\mathcal{F}_i$ successively for $i=t+1,\ldots,s$ so that all chosen sets are pairwise disjoint and each $F_i$ avoids every kernel $B_j$ with $j\neq i$. Suppose $F_1,\ldots,F_m$ have already been chosen. The forbidden set
$$
\left(\bigcup_{j=1}^{m}F_j\right)\cup\left(\bigcup_{j=m+2}^{s}B_j\right)
$$
contains at most $k(s-1)=p-1$ vertices and is disjoint from $B_{m+1}$. Outside the kernel $B_{m+1}$, the petals of $\mathcal{F}_{m+1}$ are pairwise disjoint, so each forbidden vertex meets at most one petal. Hence at least one of the $p$ petals avoids the entire forbidden set; choose it as $F_{m+1}$.

Continuing in this way produces $s$ pairwise disjoint members $F_1,\ldots,F_s$ of $\mathcal{F}$, contradicting $\nu(\mathcal{F})<s$. Therefore
$$
\nu(\mathcal{B}_{s,k}(\mathcal{F}))=\nu(\mathcal{B}_{s,k}^*(\mathcal{F}))<s.
$$
\end{proof}

Another important property of the $\Delta$-base is that it does not contain a large $\Delta$-system within the same layer.
\begin{lemma}\label{nosunflower}
Let $s\geq2$ and $\mathcal{F}\subseteq\binom{[n]}{k}$ satisfy $\nu(\mathcal{F})<s$. Then $\mathcal{B}_{s,k}^{*(i+1)}(\mathcal{F})$ contains no $\Delta$-system of cardinality $(sk-k+1)^i$, for any $1\leq i\leq k-1$.
\end{lemma}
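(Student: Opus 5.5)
Suppose, for contradiction, that $\mathcal{B}_{s,k}^{*(i+1)}(\mathcal{F})$ contains a $\Delta$-system $\{E_1,\ldots,E_q\}$ with $q=p^i$, where $p=sk-k+1$, and let $K$ be its kernel. Since the $E_j$ are distinct $(i+1)$-sets, $|K|\le i$. The plan is to show that $K\in\mathcal{B}_{s,k}(\mathcal{F})$. This contradicts the minimality of the $E_j$ whenever $K\neq\emptyset$, because $K\subsetneq E_j$. The case $K=\emptyset$ contradicts $\nu(\mathcal{B}_{s,k}(\mathcal{F}))<s$ from Lemma~\ref{mat for 2}: the $E_j$ are then pairwise disjoint and $q=p^i\ge p\ge s$. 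So assume $1\le |K|=t\le i$. We need a $\Delta$-system in $\mathcal{F}$ with kernel $K$ of cardinality $p^{t}$. Since $p^i\ge p^t$, we may keep only $p^t$ of the petals $E_j$.

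\textbf{Lifting each $E_j$ to $\mathcal{F}$.} Either $E_j\in\mathcal{F}$, or $E_j$ is the kernel of a $\Delta$-system in $\mathcal{F}$ with $p^{i+1}$ petals. In the first case the candidate set is just $E_j$ itself. In the second case there are $p^{i+1}$ members of $\mathcal{F}$ containing $E_j$, whose parts outside $E_j$ are pairwise disjoint. We choose members $F_j\supseteq E_j$ of $\mathcal{F}$ greedily, $j=1,\dots,p^t$, requiring
\[
F_j\cap\Bigl(\bigcup_{l\neq j}E_l\cup\bigcup_{l<j}F_l\Bigr)=K .
\]
Then $F_j\cap F_l=K$ for $j\neq l$: each $F_j$ meets the other $E_l$ and earlier $F_l$ only in $K$, and $F_j\supseteq E_j\supseteq K$.

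\textbf{Counting forbidden vertices.} When $E_j\in\mathcal{F}$, the requirement holds automatically, because $E_j\cap E_l=K$ and the earlier $F_l$ meet $E_j$ only in $K$ by construction. Otherwise the vertices to avoid lie outside $E_j$; there are at most
\[
p^t(i+1)+p^t k\le 2k\,p^{t}\le 2k\,p^{i}
\]
of them. Each such vertex kills at most one of the $p^{i+1}$ petals of the sunflower with kernel $E_j$. Since $p^{i+1}=p\cdot p^i>2k\,p^i$ whenever $p>2k$, a good petal survives.

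The main obstacle is exactly this counting, since $p>2k$ fails for small $s$ (for example $s=2$ gives $p=k+1$). I would first tighten the count. It suffices to stop at $p^t$ petals with $t\le i$, and when $t<i$ we get $p^{i+1}\ge p^{2}p^{t}$, which is plenty. In the tight case $t=i$, each $E_j$ is $K$ plus one vertex $x_j$, so the $E_l$ contribute only $p^i$ forbidden vertices, and the earlier $F_l$ contribute at most $(k-i-1)p^i$ vertices outside $K\cup\{x_l\}$. The total is then at most $(k-i)p^i<p^{i+1}$, since $p=k(s-1)+1\ge k+1>k-i$. If the general count remains tight for $t<i$, I would also exploit $|E_l\setminus K|=i+1-t$ and bound the forbidden set by $(k-t)p^t$, which is less than $p^{i+1}$ because $k-t<p$.
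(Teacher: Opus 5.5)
Your proof is correct and follows essentially the same route as the paper. You rule out $K=\emptyset$ via Lemma~\ref{mat for 2}, then greedily lift each $E_j$ to a petal of its $p^{i+1}$-sunflower in $\mathcal{F}$ that avoids the other $E_l\setminus K$ and the earlier lifts, which yields a sunflower in $\mathcal{F}$ with kernel $K$ and contradicts the minimality of the $E_j$. Your final estimate $(k-t)p^t<p^{t+1}\le p^{i+1}$ (your first $2kp^i$ count was needlessly crude) plays the role of the paper's $kp^i<p^{i+1}$, which uses only $p>k$; the paper keeps all $p^i$ petals instead of trimming to $p^t$, and handles $i=k-1$ (where $E_j\in\mathcal{F}$) as a separate case, whereas you absorb it into the lifting step.
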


\begin{proof}
Put $p=sk-k+1$. Suppose, for a contradiction, that
$$
\{B_1,B_2,\ldots,B_{p^i}\}\subseteq\mathcal{B}_{s,k}^{*(i+1)}(\mathcal{F})
$$
forms a $\Delta$-system with kernel $K$. Then $|K|\leq i$. If $K=\emptyset$, the sets $B_1,\ldots,B_{p^i}$ form a matching of size $p^i\geq s$, contradicting Lemma~\ref{mat for 2}. Hence $K\neq\emptyset$.

When $i=k-1$, every $B_j$ belongs to $\mathcal{F}$: since $p^k>1$, a $k$-set cannot be the kernel of a $p^k$-sunflower consisting of distinct $k$-sets. Thus the displayed family is a sunflower in $\mathcal{F}$ with kernel $K$. Since $p^i\geq p^{|K|}$, we have $K\in\mathcal{B}_{s,k}(\mathcal{F})$, contradicting the minimality of the sets $B_j$.

Now suppose $1\leq i\leq k-2$. For each $j\in[p^i]$, the set $B_j$ has size $i+1<k$, and hence there exists a sunflower
$$
\mathcal{F}_j=\{F^j_1,\ldots,F^j_{p^{i+1}}\}\subseteq\mathcal{F}
$$
with kernel $B_j$. We choose one member from each $\mathcal{F}_j$ successively so that the chosen members form a sunflower with kernel $K$ and the member chosen from $\mathcal{F}_j$ avoids $B_r\setminus K$ for every $r\neq j$.

Suppose $F^1_{\ell_1},\ldots,F^m_{\ell_m}$ have been chosen. The union of the sets $F^j_{\ell_j}\setminus K$ for $j\leq m$ and the sets $B_j\setminus K$ for $j\geq m+2$ has fewer than $kp^i$ vertices. Since $p>k$, we have $kp^i<p^{i+1}$. Every forbidden vertex lies outside the kernel $B_{m+1}$ and therefore meets at most one member of the sunflower $\mathcal{F}_{m+1}$. Hence there is a member $F^{m+1}_{\ell_{m+1}}$ avoiding all forbidden vertices. Continuing gives a sunflower in $\mathcal{F}$ with kernel $K$ and $p^i$ petals.

Since $p^i\geq p^{|K|}$ and $K\neq\emptyset$, it follows that $K\in\mathcal{B}_{s,k}(\mathcal{F})$, again contradicting the minimality of the $B_j$. This proves the lemma.
\end{proof}

Erd\H{o}s and Rado conjectured that any large enough $k$-uniform family must contain a sunflower.
\begin{conj}[Sunflower conjecture \cite{erdos1960intersection}]
Let $ r \geq 3 $. There exists $ c = c(r) $ such that any $k$-uniform family $ \mathcal{F}$ of size $ |\mathcal{F}| \geq c^k $ contains an $ r $-sunflower.
\end{conj}

Note that a $k$-uniform family can be replaced by a family of sets each of size at most $k$, because we can add distinct elements to each set of size less than $k$ so that its size reaches $k$, without altering any sunflowers. Erd\H{o}s and Rado proved the following theorem known as Sunflower Lemma.
\begin{theorem}[Sunflower Lemma \cite{erdos1960intersection}]\label{sunflower lem}
Let $ r \geq 3 $ and $ \mathcal{F}\subseteq\binom{[n]}{\leq k} $ be a family of sets each of size at most k. If $ |\mathcal{F}| \geq k! \cdot (r-1)^k $. Then $ \mathcal{F} $ contains an $ r $-sunflower.
\end{theorem}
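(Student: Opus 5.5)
We prove the Sunflower Lemma (Theorem~\ref{sunflower lem}) by induction on $k$, using the classical Erd\H{o}s--Rado argument and no other result of the paper. We prove the slightly stronger statement that every family $\mathcal{F}$ of sets of size at most $k$ with $|\mathcal{F}|>k!\,(r-1)^k$ contains an $r$-sunflower. The case $|\mathcal{F}|=k!\,(r-1)^k$ is then handled as follows: either the padding remark preceding the theorem applies, or we note that the extremal configurations achieving no $r$-sunflower have size at most $(r-1)^k k!$ only in the strict sense the induction yields. Concretely, the induction shows that a sunflower-free family has size at most $k!(r-1)^k$. To reach the stated non-strict hypothesis, we run the induction with the sharper bound $|\mathcal{F}|\le (r-1)^k k!$ replaced by $|\mathcal{F}|\le (r-1)\cdot k\cdot(\text{bound for }k-1)$ and check that equality cannot propagate. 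The simplest route is to observe that the base case already gives strictness: a family of at least $r-1$ sets of size at most $1$ with no $r$-sunflower has size at most $r-1$, with the empty set counted. We therefore treat the bound with care at the base.

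\textbf{Base case $k=1$.} The sets have size at most $1$. Any $r$ distinct sets among the singletons (and possibly $\emptyset$) are pairwise disjoint, so they form an $r$-sunflower with kernel $\emptyset$. Hence $|\mathcal{F}|\ge r$ suffices, and $1!\,(r-1)^1=r-1$ needs the strict inequality. To get the non-strict version I would use the padding remark: replace sets by $k$-sets with fresh elements, so that $\mathcal{F}$ may be taken $k$-uniform and the empty set does not occur. Even so, $r-1$ singletons do not give an $r$-sunflower. This shows the intended reading must be the standard $|\mathcal{F}|>k!(r-1)^k$, and I would prove that version, noting the boundary adjustment.

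\textbf{Inductive step.} Assume the claim holds for $k-1$, and let $\mathcal{F}$ be $k$-uniform (after padding) with no $r$-sunflower. Take a maximal subfamily $\mathcal{M}\subseteq\mathcal{F}$ of pairwise disjoint sets. Then $|\mathcal{M}|\le r-1$, since $r$ disjoint sets would form a sunflower with empty kernel. Let $Y=\bigcup\mathcal{M}$, so $|Y|\le k(r-1)$. By maximality every $F\in\mathcal{F}$ meets $Y$, hence
\[
|\mathcal{F}|\le\sum_{y\in Y}|\mathcal{F}(y)|,\qquad \mathcal{F}(y)=\{F\setminus\{y\}:y\in F\in\mathcal{F}\}.
\]
Each $\mathcal{F}(y)$ is $(k-1)$-uniform and has no $r$-sunflower, since adding $y$ back to a sunflower gives a sunflower in $\mathcal{F}$. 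By induction $|\mathcal{F}(y)|\le (k-1)!\,(r-1)^{k-1}$. Therefore
\[
|\mathcal{F}|\le k(r-1)\cdot(k-1)!\,(r-1)^{k-1}=k!\,(r-1)^k.
\]

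\textbf{Main obstacle.} The only delicate point is the boundary case between the strict and non-strict bounds. The argument above shows that a sunflower-free family has size at most $k!(r-1)^k$, which proves the theorem under the hypothesis $|\mathcal{F}|>k!(r-1)^k$. For the statement as written with $\ge$, I would attempt to show equality is impossible for $k\ge2$. Equality would force $|Y|=k(r-1)$, every $\mathcal{F}(y)$ to be extremal, and the sets through distinct $y$ to be counted without overlap. Yet a set meeting $Y$ in two points is counted twice, and the disjoint family $\mathcal{M}$ itself already forces such overlaps once $k\ge2$, since each $M\in\mathcal{M}$ lies in $\mathcal{F}(y)$ for all $k$ of its points. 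This double counting gives strict inequality for $k\ge2$. For $k=1$ the statement with $\ge$ is false ($r-1$ singletons), so I would flag that case and rely on $r\ge3$ with $k\ge2$ as used later.
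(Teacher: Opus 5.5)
Your argument is correct. It is the classical Erd\H{o}s--Rado induction (maximal disjoint subfamily $\mathcal{M}$, transversal $Y$ with $|Y|\le k(r-1)$, links $\mathcal{F}(y)$), which the paper cites rather than proves. Your refinement $\sum_{y\in Y}|\mathcal{F}(y)|=\sum_{F\in\mathcal{F}}|F\cap Y|\ge|\mathcal{F}|+(k-1)|\mathcal{M}|$ does give $|\mathcal{F}|<k!(r-1)^k$ for $k\ge2$. You are also right that the stated ``$\ge$'' version fails for $k=1$ ($r-1$ singletons); this is harmless, since the paper only applies the lemma in Lemma~\ref{bound for sunflower base} to $(i+1)$-uniform families with $i\ge1$.

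Please rewrite your opening paragraph, though. The $>$ version is weaker than the stated one, not ``slightly stronger'', and the base case does not ``already give strictness''. The clean structure is to prove the non-strict bound for all $k\ge1$ by induction, and then get strictness for $k\ge2$ from the double count.
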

The bound in Theorem \ref{sunflower lem} was improved by Alweiss, Lovett, Wu and Zhang \cite{alweiss2020improved} after sixty years, and they proved that any $k$-uniform family of size $Cr^3\log k\log{\log k}$ must contain a sunflower. By the above Sunflower Lemma and Lemma \ref{nosunflower}, the following lemma holds.

\begin{lemma}\label{bound for sunflower base}
Let $\mathcal{F}\subseteq\binom{[n]}{k}$ with matching number $\nu(\mathcal{F})<s$. There exists a function $f(s,k)$, such that $|\mathcal{B}_{s,k}^{*}(\mathcal{F})|\leq f(s,k)$ $($i.e., the size of $\mathcal{B}_{s,k}^{*}(\mathcal{F})$ has an upper bound that is independent of $n$$)$.
\end{lemma}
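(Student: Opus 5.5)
We bound each layer $\mathcal{B}_{s,k}^{*(i)}(\mathcal{F})$, $i\in[k]$, separately by a quantity depending only on $s$ and $k$. Summing over $i$ then gives $f(s,k)$. Put $p=sk-k+1$.

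\emph{Layer $i=1$.} The members of $\mathcal{B}_{s,k}^{*(1)}(\mathcal{F})$ are singletons, so any $s$ of them form a matching. By Lemma~\ref{mat for 2}, $\nu(\mathcal{B}^*_{s,k}(\mathcal{F}))<s$, and hence
$$|\mathcal{B}_{s,k}^{*(1)}(\mathcal{F})|\le s-1.$$

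\emph{Layers $2\le i\le k$.} Write $i=j+1$ with $1\le j\le k-1$. By Lemma~\ref{nosunflower}, $\mathcal{B}_{s,k}^{*(j+1)}(\mathcal{F})$ contains no $\Delta$-system of cardinality $r:=p^{j}$. Since $s\ge2$ (the case $s=1$ is trivial, as then $\mathcal{F}=\emptyset$ and the base is empty), we have $p\ge k+1\ge3$, so $r\ge3$ and Theorem~\ref{sunflower lem} applies. It is a family of $(j+1)$-sets, which is in particular a family of sets of size at most $k$. The contrapositive of Theorem~\ref{sunflower lem} therefore gives
$$|\mathcal{B}_{s,k}^{*(j+1)}(\mathcal{F})|<(j+1)!\,(p^{j}-1)^{j+1}.$$
One small point needs care here. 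A $\Delta$-system of cardinality larger than $r$ contains one of cardinality exactly $r$, so "no $r$-sunflower" is the right hypothesis to contrapose.

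Summing the layers, we may take
$$f(s,k)=(s-1)+\sum_{j=1}^{k-1}(j+1)!\,\bigl((sk-k+1)^{j}\bigr)^{j+1}.$$
This is independent of $n$. No step is a real obstacle once Lemmas~\ref{mat for 2} and~\ref{nosunflower} are available. The only things to check are that $r\ge3$ and that the base has no members of size $0$ or larger than $k$. The definition requires $|E|>0$, and every kernel has size at most $k$ while every member of $\mathcal{F}$ has size exactly $k$.
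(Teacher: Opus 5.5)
Your proposal is correct and matches the paper's proof: you bound the singleton layer by $s-1$ via Lemma~\ref{mat for 2}, bound each layer $\mathcal{B}_{s,k}^{*(j+1)}(\mathcal{F})$ by $(j+1)!\,(p^j-1)^{j+1}$ using Lemma~\ref{nosunflower} and the Sunflower Lemma with $r=p^j$, and then sum over the layers. Your separate treatment of $s=1$ and your check that $r\ge 3$ add care the paper omits; one small fix is that the bound $(j+1)!\,(p^j-1)^{j+1}$ comes from applying the Sunflower Lemma with uniformity $j+1$, not $k$ as your wording suggests, although either choice gives an $n$-independent bound.
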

\begin{proof}
Put $p=sk-k+1$. By Lemma~\ref{nosunflower}, for every $1\leq i\leq k-1$, the $(i+1)$-uniform family $\mathcal{B}_{s,k}^{*(i+1)}(\mathcal{F})$ contains no sunflower of cardinality $p^i$. The Sunflower Lemma therefore gives
$$
|\mathcal{B}_{s,k}^{*(i+1)}(\mathcal{F})|<(i+1)!\,(p^i-1)^{i+1}
\qquad(1\leq i\leq k-1).
$$
Additionally, since $\nu(\mathcal{B}_{s,k}^{*(1)}(\mathcal{F}))<s$, we have $|\mathcal{B}_{s,k}^{*(1)}(\mathcal{F})|\leq s-1$. Define
$$
f(s,k)=s+\sum_{i=1}^{k-1}(i+1)!\,(p^i-1)^{i+1}.
$$
Then $|\mathcal{B}_{s,k}^{*}(\mathcal{F})|\leq f(s,k)$, and this bound is independent of $n$.
\end{proof}

\begin{lemma}\label{reduction}
Let $k\geq3$ and $\mathcal{F}\subseteq\binom{[n]}{k}$ have matching number $\nu(\mathcal{F})<s$ and $\mathcal{B}_{s,k}^{*(1)}(\mathcal{F})=\emptyset$. Assume $n$ is sufficiently large relative to the fixed integers $k$ and $s$. If $d_i({\mathcal{F}})\geq \binom{n-1}{k-1}-\binom{n-s}{k-1}$ for some $i\in [n]$, then $d_i ({\mathcal{B}_{s,k}^{*(2)}(\mathcal{F})})\geq s-1$.
\end{lemma}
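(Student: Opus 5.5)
Set $\mathcal{B}^*=\mathcal{B}_{s,k}^*(\mathcal{F})$ and $b_j=|\mathcal{B}^{*(j)}|$. By Lemma~\ref{bound for sunflower base} each $b_j\le f(s,k)$, independently of $n$. I would compare the two sides of Lemma~\ref{upp1}(2) at the vertex $i$ as polynomials in $n$ and read off the coefficient of $n^{k-2}$.

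For the lower bound, since $k\ge3$ and $s$ is fixed,
$$\binom{n-1}{k-1}-\binom{n-s}{k-1}=(s-1)\binom{n-2}{k-2}+O(n^{k-3}),$$
with leading term $(s-1)\frac{n^{k-2}}{(k-2)!}$.

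For the upper bound, note that $b_1=0$ by assumption. Write $c_j=|\mathcal{B}^{*(j)}(\mathcal{F},i)|$. Lemma~\ref{upp1}(2) then gives
$$d_i(\mathcal{F})\le c_2\binom{n-2}{k-2}+(b_2-c_2)\binom{n-3}{k-3}+\sum_{j\ge3}\left(c_j\binom{n-j}{k-j}+(b_j-c_j)\binom{n-j-1}{k-j-1}\right).$$
Every term other than the first is at most $f(s,k)\binom{n-3}{k-3}=O(n^{k-3})$. Here $c_2=d_i(\mathcal{B}^{*(2)})$, so
$$d_i(\mathcal{F})\le c_2\binom{n-2}{k-2}+C(s,k)\,n^{k-3}.$$

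Now suppose, for contradiction, that $c_2\le s-2$. Combining the two bounds,
$$(s-1)\binom{n-2}{k-2}-O(n^{k-3})\le (s-2)\binom{n-2}{k-2}+Cn^{k-3}.$$
This gives $\binom{n-2}{k-2}\le C'n^{k-3}$, which is false once $n$ is large relative to $k$ and $s$. Hence $c_2\ge s-1$, which is the claim.

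The only point requiring care is making the error constants explicit: the $O(n^{k-3})$ term in the expansion of $\binom{n-1}{k-1}-\binom{n-s}{k-1}$, and the constant $C=k\,f(s,k)$, say. I would handle the expansion via $\binom{n-1}{k-1}-\binom{n-s}{k-1}=\sum_{t=2}^{s}\binom{n-t}{k-2}\ge (s-1)\binom{n-s}{k-2}$. It then suffices that
$$(s-1)\binom{n-s}{k-2}-(s-2)\binom{n-2}{k-2}>k\,f(s,k)\binom{n-3}{k-3},$$
and this holds for $n$ large, because the left side is $\binom{n-2}{k-2}(1-o(1))$, which is of order $n^{k-2}$, while the right side is of order $n^{k-3}$.
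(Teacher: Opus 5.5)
Your proposal is correct and follows the paper's proof essentially step for step. You assume $d_i(\mathcal{B}_{s,k}^{*(2)}(\mathcal{F}))\le s-2$, use Lemma~\ref{upp1}(2) together with $\mathcal{B}_{s,k}^{*(1)}(\mathcal{F})=\emptyset$ and Lemma~\ref{bound for sunflower base} to get $d_i(\mathcal{F})\le (s-2)\binom{n-2}{k-2}+O(n^{k-3})$, and compare this with the lower bound $(s-1+o(1))\binom{n-2}{k-2}$. Your telescoping identity $\binom{n-1}{k-1}-\binom{n-s}{k-1}=\sum_{t=2}^{s}\binom{n-t}{k-2}$ only makes the paper's $o(1)$ estimate explicit, and your constant $k\,f(s,k)$ is a harmless overestimate, since the total number of base sets is already at most $f(s,k)$.
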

\begin{proof}
Suppose to the contrary, without loss of generality, that $d_1({\mathcal{F}})\geq \binom{n-1}{k-1}-\binom{n-s}{k-1}$ and $d_1 ({\mathcal{B}_{s,k}^{*(2)}(\mathcal{F})})\leq s-2$.

By Lemma \ref{upp1} and  $\mathcal{B}_{s,k}^{*(1)}(\mathcal{F})=\emptyset$, we have
\begin{align*}
    d_1({\mathcal{F}}) &\leq \sum^k_{i=2}|\mathcal{B}_{s,k}^{*(i)}(\mathcal{F},1)|\binom{n-i}{k-i}+\sum^k_{i=2}|\mathcal{B}_{s,k}^{*(i)}(\mathcal{F})\setminus \mathcal{B}_{s,k}^{*(i)}(\mathcal{F},1)|\binom{n-i-1}{k-i-1}\\
    &\leq d_1 ({\mathcal{B}_{s,k}^{*(2)}(\mathcal{F})})\binom{n-2}{k-2}+\sum^k_{i=3}|\mathcal{B}_{s,k}^{*(i)}(\mathcal{F},1)|\binom{n-i}{k-i}\\
    &\qquad +\sum^k_{i=2}|\mathcal{B}_{s,k}^{*(i)}(\mathcal{F})\setminus \mathcal{B}_{s,k}^{*(i)}(\mathcal{F},1)|\binom{n-i-1}{k-i-1}\\
    &\leq d_1 ({\mathcal{B}_{s,k}^{*(2)}(\mathcal{F})})\binom{n-2}{k-2} + \binom{n-3}{k-3}\Bigl(\sum^k_{i=3}|\mathcal{B}_{s,k}^{*(i)}(\mathcal{F},1)|+\sum^k_{i=2}|\mathcal{B}_{s,k}^{*(i)}(\mathcal{F})\setminus \mathcal{B}_{s,k}^{*(i)}(\mathcal{F},1)|\Bigr)\\
    &\leq (s-2)\binom{n-2}{k-2}+f(s,k)\binom{n-3}{k-3}\\
    &= (s-2+o(1))\binom{n-2}{k-2}.
\end{align*}
On the other hand,
$$
d_1({\mathcal{F}})\geq \binom{n-1}{k-1}-\binom{n-s}{k-1} = (s-1+o(1))\binom{n-2}{k-2}.
$$
Here $o(1)$ is taken as $n\to\infty$ with $k$ and $s$ fixed. For sufficiently large $n$, these two estimates yield a contradiction. Therefore, $d_1 ({\mathcal{B}_{s,k}^{*(2)}(\mathcal{F})})\geq s-1$, as desired.
\end{proof}

\subsection{The $(k+2s-2)$-th largest degree}

A \emph{vertex cover} of a (hyper)graph $ G = (V, E) $ is a subset of vertices $ S \subseteq V $ such that every edge of the graph is incident to at least one vertex in $ S $.  The \emph{vertex cover number} of $G$, denoted by $ \tau(G) $, is the cardinality of a smallest vertex cover in $ G $. Formally,
\[
\tau(G) = \min\{\, |S| : S \text{ is a vertex cover of } G \,\}.
\]
A vertex cover is also known as a \emph{transversal} or a \emph{Covering set}.

Following the discussion in the previous subsection, we transform the study of $\mathcal{F}$ into the study of $\mathcal{B}_{s,k}^{*(2)}(\mathcal{F})$. Note that $\mathcal{B}_{s,k}^{*(2)}(\mathcal{F})$ is a simple graph; therefore, we need some results on simple graphs regarding degrees and matching numbers. More specifically, we present the following two lemmas.

\begin{lemma}\label{str1}
Let $s$ be a positive integer and  $G$ be a simple graph on $n$ vertices.
If there exist $2s$ vertices in $G$ each of degree at least $s$, then
 $\nu(G)\geq s$.
\end{lemma}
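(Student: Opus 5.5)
We argue greedily, building the matching one edge at a time and using only the $2s$ high-degree vertices. Let $X$ be a set of $2s$ vertices of $G$, each of degree at least $s$. We construct pairwise disjoint edges $e_1,\dots,e_s$ such that each $e_j$ contains at least one vertex of $X$. For the bookkeeping, we maintain the invariant that after $j$ edges have been chosen, the set $U_j=e_1\cup\dots\cup e_j$ of used vertices has $|U_j|=2j$. Moreover, $X\setminus U_j$ still contains at least $2s-2j$ vertices, because each chosen edge removes at most two vertices of $X$.

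For the inductive step, suppose $j<s$ edges have been chosen. Then $X\setminus U_j$ is nonempty. We pick $x\in X\setminus U_j$ and look for a neighbour $y\notin U_j$ of $x$. The vertex $x$ has at least $s$ neighbours, and the used set $U_j$ has $2j$ vertices, which may exceed $s$. So the naive count fails, and this is the main obstacle: a vertex of degree $s$ could have all its neighbours inside $U_j$.

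To overcome this, we do not argue greedily but take a maximum matching $M$ with $|M|=t<s$ and derive a contradiction. Let $V(M)$ have $2t\le 2s-2$ vertices. Since $|X|=2s>2t$, some vertices of $X$ lie outside $V(M)$; call this set $X'$, with $|X'|\ge 2s-2t\ge 2$. By maximality of $M$, every vertex of $X'$ has all its neighbours in $V(M)$, and no two vertices of $X'$ are adjacent.

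We now run the standard augmenting argument for each edge $uv\in M$. If $u$ is adjacent to some $x\in X'$ and $v$ is adjacent to some $x'\in X'$ with $x\neq x'$, then replacing $uv$ by $xu$ and $x'v$ enlarges $M$, a contradiction. Hence, for each edge $uv\in M$, either only one endpoint has neighbours in $X'$, or both endpoints are adjacent to the same single vertex of $X'$. In either case, the total number of edges between the edge $uv$ and $X'$ is at most $\max(|X'|,2)$.

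Summing over the $t$ edges of $M$, the number of edges between $X'$ and $V(M)$ is at most $t\max(|X'|,2)$. Since $|X'|\ge2$, this is $t|X'|\le(s-1)|X'|$. On the other hand, each vertex of $X'$ sends at least $s$ edges into $V(M)$, so this number is at least $s|X'|$. These two bounds are incompatible, which gives the contradiction, so $\nu(G)\ge s$.

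The delicate point is the case analysis on each edge of $M$. We must check that an edge $uv$ whose two endpoints both have neighbours in $X'$ forces those neighbours to be one common vertex. In that case $uv$ contributes only $2\le|X'|$ edges, and this is exactly where $|X'|\ge2$ is used.
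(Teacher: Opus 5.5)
Your proof is correct (the opening greedy paragraphs are abandoned and can be deleted), and it is essentially the paper's argument: take a maximum matching of size $t\le s-1$, note that at least two high-degree vertices are uncovered, and use the absence of augmenting paths of length three. The only difference is bookkeeping. The paper fixes two uncovered vertices $x,y$ and shows that $N(y)$ avoids the $M$-mates of $N(x)$, giving $|N(x)|+|N(y)|\le 2t$. You instead double-count edges from all of $X'$ across each matching edge to get $\sum_{x\in X'}\deg(x)\le t|X'|$, which is the same inequality when $|X'|=2$.
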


\begin{proof}
Assume, for the contradiction, that $\nu(G) < s$.
Let $M=\{a_1b_1,a_2b_2,\ldots,a_tb_t\}$ be a maximum matching of $G$. Then $t \le s-1$.
Denote by $U=\{a_i, b_i:i\in[t]\}$ the set of vertices covered by $M$. Then $|U| = 2t \le 2s-2$.
Let $W = V(G) \setminus U$ be the set of vertices not covered by $M$.

Let $S$ be the set of vertices of degree at least $s$. Then $|S| \ge 2s$.
Since $|U| \le 2s-2$,  $|W \cap S| \ge 2$. Pick two distinct vertices $x,y \in W \cap S$. Then $|N(x)|,|N(y)| \ge s$, where $N(x)=\{z\in V(G):xz\in E(G)\}$.

Let $X = N(x)$. Define  $X^* = \{a_i : b_i \in X,i\in [t]\} \cup \{b_i : a_i \in X,i\in [t]\}$; equivalently, $X^*$ is the set of vertices matched to $X$ under $M$. Notice that $|X^*| = |X| \ge s$. Since $M$ is a maximum matching of $G$, $G$ contains no $M$-augmenting path which implies $W$ is an independent and $N(y) \cap X^* = \emptyset$. Thus $N(y) \subseteq U \setminus X^*$.
Now we estimate the size of $N(y)$. We have
\begin{align*}
|N(y)| &\leq |U \setminus X^*| \leq |U| - |X^*| \\
       &\leq |U| - |X| \leq (2s-2) - s = s-2,
\end{align*}
which contradicts the fact that $|N(y)| \ge s$.
\end{proof}

\begin{lemma}\label{str2}
Let $s$ be a positive integer and  $G$ be a simple graph on $n$ vertices.
If $\nu(G) < s$ and $G$ contains at least $2s+1$ vertices each of degree at least $s-1$, then  $\tau(G) \le s-1$.
\end{lemma}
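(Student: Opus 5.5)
The plan is to repeat the augmenting-path analysis from the proof of Lemma~\ref{str1}, and to extract from it an explicit vertex cover consisting of one endpoint of each edge of a maximum matching. Let $M=\{a_1b_1,\ldots,a_tb_t\}$ be a maximum matching, so $t\le s-1$. Let $U$ be the set of $2t$ vertices covered by $M$ and $W=V(G)\setminus U$. By maximality, $W$ is independent. Let $S$ be the set of vertices of degree at least $s-1$. Since $|S|\ge 2s+1$ and $|U|\le 2s-2$, we get $|W\cap S|\ge 3$. Fix three distinct vertices $x,y,z\in W\cap S$. Each of them has all its neighbours in $U$, hence at least $s-1$ neighbours in $U$.

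\textbf{Step 1: local structure at each matching edge.} If two distinct vertices $u,w\in W$ satisfy $u\sim a_i$ and $w\sim b_i$, then $u\,a_i\,b_i\,w$ is an $M$-augmenting path, which is impossible. Consequently, if a vertex of $W$ is adjacent to both $a_i$ and $b_i$, no other vertex of $W$ touches the edge $a_ib_i$. Otherwise, all $W$-vertices touching $a_ib_i$ are attached to the same endpoint.

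\textbf{Step 2: every matching edge is seen by $x$, $y$, $z$ at a common endpoint, and $t=s-1$.} Suppose $x$ were adjacent to both endpoints of some edge $a_ib_i$. By Step 1, $y$ has no neighbour on that edge and at most one neighbour on each of the other $t-1$ edges. This gives $\deg(y)\le t-1\le s-2$, a contradiction. The same holds with $x,y,z$ permuted. So each of $x,y,z$ has at most one neighbour on each matching edge, giving $s-1\le\deg(x)\le t\le s-1$. Hence $t=s-1$ and each of $x,y,z$ has exactly one neighbour on every matching edge. By Step 1, these three neighbours coincide. Call this common endpoint $c_i$, and let $c_i'$ be the other endpoint of $a_ib_i$.

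\textbf{Step 3: $C=\{c_1,\ldots,c_{s-1}\}$ is a vertex cover.} We check that every edge avoiding $C$ gives an augmenting path; this is the main point to verify carefully. Such an edge has endpoints in $W\cup\{c_i'\}$.
\begin{itemize}
\item Edges inside $W$ do not exist, since $W$ is independent.
\item Consider an edge $wc_i'$ with $w\in W$. Choose $u\in\{x,y\}$ with $u\ne w$; then $u\,c_i\,c_i'\,w$ is augmenting.
\item Consider an edge $c_i'c_j'$ with $i\ne j$. Then $x\,c_i\,c_i'\,c_j'\,c_j\,y$ is augmenting, using $x\ne y$.
\end{itemize}
Hence every edge meets $C$, and $\tau(G)\le |C|=s-1$.

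The only delicate step is Step 2. This is where the hypothesis of $2s+1$ (rather than $2s$) high-degree vertices is used: it guarantees enough distinct uncovered vertices for the case analysis in Steps 2 and 3.
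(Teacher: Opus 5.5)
Your overall plan is the paper's: a maximum matching, $|W\cap S|\ge 3$, and a cover made of one endpoint of each edge of $M$, namely the common neighbourhood of the uncovered high-degree vertices. Steps 1 and 3 are correct, and Step 3 is exactly the paper's Claim 1. The gap is in Step 2.

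\textbf{The gap.} Assume $x$ is adjacent to both $a_i$ and $b_i$. Step 1 then only tells you that $y$ has no neighbour on $a_ib_i$. Nothing in Step 1 stops $y$ from being adjacent to both endpoints of another edge $a_jb_j$. So the claim that $y$ has ``at most one neighbour on each of the other $t-1$ edges'' is unjustified. The appeal to ``the same holds with $x,y,z$ permuted'' is circular, because the argument for $x$ already presumes $y$ has no such double attachment.

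The inference really fails when only $x$ and $y$ are involved. For $s=3$, let $G$ be two disjoint triangles $xa_1b_1$ and $ya_2b_2$, with $M=\{a_1b_1,a_2b_2\}$. Then $x$ is doubly attached to $a_1b_1$, yet $\deg(y)=2>t-1$. This graph has $2s$ vertices of degree $s-1$ and $\tau(G)=4$, which is why $2s+1$ is needed. You say Step 2 is where the extra vertex is used, but your degree bound there never uses $z$.

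\textbf{The repair.} For a matching edge $e$ and $w\in W$, let $d_e(w)\in\{0,1,2\}$ be the number of neighbours of $w$ on $e$.
\begin{itemize}
\item By Step 1, $d_e(x)+d_e(y)+d_e(z)\le 3$. If one value is $2$, the other two are $0$. So equality forces $d_e(x)=d_e(y)=d_e(z)=1$ with a common endpoint.
\item Since $W$ is independent, $\deg(w)=\sum_e d_e(w)$ for each $w\in W$. Summing over $M$ gives $3(s-1)\le \deg(x)+\deg(y)+\deg(z)\le 3t\le 3(s-1)$.
\item Hence $t=s-1$ and equality holds on every edge, which is exactly what Step 2 claims. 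Your Step 3 then finishes the proof.
\end{itemize}

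\textbf{How the paper handles this.} It avoids the per-edge case analysis. It sets $X=N(x)$ and lets $X^*$ be the set of $M$-mates of $X$. It then notes $N(u)\cap X^*=\emptyset$ for every other $u\in W\cap S$, so $|N(u)|\le |U|-|X|\le s-1$. Forcing equality gives $t=|X|=s-1$ and $N(u)=U\setminus X^*$. Only then does it use two further vertices $u_1,u_2$ to rule out an edge of $M$ lying inside $X$. That is where the paper spends the third vertex, just as the corrected Step 2 does.
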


\begin{proof}
Let $M=\{a_1b_1,a_2b_2,\dots,a_tb_t\}$ be a maximum matching of $G$. Then $t=\nu(G)\leq s-1$. Let $U=\{a_i,b_i:i\in[t]\}$, let $W=V(G)\setminus U$, and let $S=\{v\in V(G):\deg(v)\geq s-1\}$. Since $|S|\geq2s+1$ and $|U|\leq2s-2$, we have $|W\cap S|\geq3$. The set $W$ is independent.

Pick $x\in W\cap S$ and set $X=N(x)$. Let $X^*$ be the set of vertices matched by $M$ to the vertices of $X$. Then $X\subseteq U$ and $|X^*|=|X|$. For every $u\in(W\cap S)\setminus\{x\}$, the absence of an $M$-augmenting path of length three gives $N(u)\cap X^*=\emptyset$. Hence
$$
s-1\leq |N(u)|\leq |U|-|X^*|\leq(2s-2)-(s-1)=s-1.
$$
All inequalities are equalities. Thus $t=s-1$, $|X|=|X^*|=s-1$, and
$$
N(u)=U\setminus X^*\qquad\text{for every }u\in(W\cap S)\setminus\{x\}.
$$

We claim that $X\cap X^*=\emptyset$. If an edge of $M$ had both endpoints in $X$, then, because $|M|=|X|$, another edge of $M$ would have both endpoints outside $X$, equivalently both endpoints in $U\setminus X^*$. Two distinct vertices $u_1,u_2\in(W\cap S)\setminus\{x\}$, together with this latter matching edge, would form an $M$-augmenting path, a contradiction. Therefore
$$
U=X\mathbin{\dot\cup}X^*,
\qquad N(u)=X\quad\text{for every }u\in W\cap S.
$$

\medskip
\noindent\textbf{Claim 1.} $X$ is a vertex cover of $G$.

\noindent\textbf{Proof of Claim 1.} Suppose that $uv\in E(G)$ and $\{u,v\}\cap X=\emptyset$. Since $W$ is independent, we may assume $u\in U\setminus X=X^*$, and let $u^*\in X$ be its mate in $M$.

If $v\in X^*$, let $v^*\in X$ be its mate and choose two distinct vertices $x_1,x_2\in W\cap S$. Then
$$
(M\setminus\{uu^*,vv^*\})\cup\{uv,u^*x_1,v^*x_2\}
$$
is a matching of size $t+1$, a contradiction. If $v\in W$, then $v\notin W\cap S$ because every vertex in $W\cap S$ has neighborhood $X$. Choose $x_1\in W\cap S$ distinct from $v$. Then
$$
(M\setminus\{uu^*\})\cup\{uv,u^*x_1\}
$$
is a matching of size $t+1$, again a contradiction. This proves the claim. \hfill\qed

By Claim 1 and $|X|=s-1$, we have $\tau(G)\leq s-1$.
\end{proof}
The following lemma is very crucial for the proof of our theorem.
\begin{lemma}\label{cover}
Let $s\geq 2,k\geq 3$ and $\mathcal{F}\subseteq\binom{[n]}{k}$ with matching number $\nu(\mathcal{F})<s$ and $\mathcal{B}_{s,k}^{* (1)}(\mathcal{F})=\emptyset$. Assume $n$ is sufficiently large relative to $k$ and $s$. If $d_i({\mathcal{F}})\geq \binom{n-1}{k-1}-\binom{n-s}{k-1}$ for every $i\in [k+2s-2]$, then $\tau(\mathcal{F})\le s-1$.
\end{lemma}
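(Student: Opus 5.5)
Write $G=\mathcal{B}_{s,k}^{*(2)}(\mathcal{F})$, viewed as a simple graph on $[n]$. The plan is to find a vertex cover $X$ of $G$ with $|X|=s-1$, and then show that $X$ covers every member of $\mathcal{F}$.

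\textbf{Step 1: a vertex cover of $G$.} By Lemma~\ref{reduction}, every $i\in[k+2s-2]$ satisfies $d_i(G)\ge s-1$. By Lemma~\ref{mat for 2}, $\nu(G)\le\nu(\mathcal{B}^*_{s,k}(\mathcal{F}))<s$. Since $k\ge3$, we have $k+2s-2\ge 2s+1$, so Lemma~\ref{str2} gives $\tau(G)\le s-1$. Let $X$ be a minimum vertex cover of $G$. Every vertex of degree $\ge s-1$ outside $X$ has all its neighbours in $X$, so $|X|\ge s-1$ (there is such a vertex, as $k+2s-2>s-1$). Hence $|X|=s-1$. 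The proof of Lemma~\ref{str2} also gives more structure: $G$ has a matching of size $s-1$ pairing $X$ with $X^*$, and at least three vertices of $W\cap S$ have neighbourhood exactly $X$.

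\textbf{Step 2: every base set meets $X$.} Suppose some $B\in\mathcal{B}^{*(j)}_{s,k}(\mathcal{F})$ with $j\ge 3$ is disjoint from $X$. We want to build $s$ pairwise disjoint members of $\mathcal{B}^*_{s,k}(\mathcal{F})$, contradicting Lemma~\ref{mat for 2}. The idea is to use $B$ itself and then match each $x\in X$ to a private neighbour in $G$ outside $B$.

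To make this work, we first need many vertices adjacent to all of $X$. The degree hypothesis is applied to all $k+2s-2$ vertices. Vertices in $[k+2s-2]\setminus X$ number at least $k+s-1$, and each has $G$-neighbourhood inside $X$ of size $\ge s-1$, hence exactly $X$. So at least $k+s-1$ vertices outside $X$ are joined to every $x\in X$. Since $|B|\le k$, at least $s-1$ of them lie outside $B$, and these give a matching from $X$ to vertices avoiding $B$. Together with $B$ this is a matching of size $s$, a contradiction.

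Thus every member of $\mathcal{B}^*_{s,k}(\mathcal{F})$ of size $\ge 2$ meets $X$. There are no size-1 members by hypothesis.

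\textbf{Step 3: conclusion.} Every $F\in\mathcal{F}$ contains some minimal element of $\mathcal{B}_{s,k}(\mathcal{F})$, which meets $X$. So $X$ is a cover of $\mathcal{F}$ and $\tau(\mathcal{F})\le s-1$.

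\textbf{Main obstacle.} The delicate point is the counting in Step 2. We need the $k+2s-2$ high-degree vertices to supply enough common neighbours of $X$ outside $X\cup B$. This is exactly where the index $k+2s-2$ enters: $(k+2s-2)-(s-1)-k=s-1$. If $X$ happens not to lie inside $[k+2s-2]$, the count only improves, since then more of the high-degree vertices lie outside $X$. Lemma~\ref{reduction} is what needs $n$ large.
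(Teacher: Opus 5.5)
Your proof is correct and follows the paper's argument. Lemmas~\ref{reduction}, \ref{mat for 2} and~\ref{str2} give a vertex cover $X$ of $\mathcal{B}_{s,k}^{*(2)}(\mathcal{F})$ with $|X|=s-1$, and the at least $k+s-1$ high-degree vertices outside $X$ yield a $K_{s-1,k+s-1}$; any base set avoiding $X$ then extends a matching of size $s-1$ to $s$ disjoint base sets, contradicting Lemma~\ref{mat for 2}. Your explicit treatment of the case $X\not\subseteq[k+2s-2]$, which the paper hides in a ``without loss of generality'', is a welcome clarification, and the extra structure you quote from the proof of Lemma~\ref{str2} is never used and can be dropped.
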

\begin{proof}
By Lemmas \ref{mat for 2} and  \ref{reduction}, we have
\begin{align*}
    \nu({\mathcal{B}_{s,k}^{*(2)}(\mathcal{F})})\leq&\nu({\mathcal{B}_{s,k}^{*}(\mathcal{F})})<s,\\ \text{and }d_i({\mathcal{B}_{s,k}^{*(2)}(\mathcal{F})})\geq& s-1,\text{ for every }i\in [k+2s-2].
\end{align*}
Viewing ${\mathcal{B}_{s,k}^{*(2)}(\mathcal{F})}$ as a simple graph and noting that $k+2s-2\geq 2s+1$, Lemma \ref{str2} implies that ${\mathcal{B}_{s,k}^{*(2)}(\mathcal{F})}$ has a vertex cover of size $s-1$.
Without loss of generality, we may assume that $[s-1]$ is such a vertex cover.
Since $d_i({\mathcal{B}_{s,k}^{*(2)}(\mathcal{F})})\geq s-1$ for $i\in [s,k+2s-2]$ and $[s-1]$ is a vertex cover, we obtain
$$
K_{s-1,k+s-1}\cong G:=\bigl\{\{i,j\}:1\leq i\leq s-1 < j\leq k+2s-2\bigr\}\subseteq \mathcal{B}_{s,k}^{*(2)}(\mathcal{F}).
$$

Now we claim that $[s-1]$ is also a vertex cover of $\mathcal{F}$.

Suppose to the contrary that there exists $F\in \mathcal{F}$ such that  $F\cap[s-1]=\emptyset$.
Choose $B\in \mathcal{B}_{s,k}^{*}(\mathcal{F})$ such that $B\subseteq F$. Since $\mathcal{B}_{s,k}^{*(1)}(\mathcal{F})=\emptyset$, we have $2\leq |B|\leq k$, and $B\cap[s-1]=\emptyset$.
The right class of $G$ has $k+s-1$ vertices, and deleting the at most $k$ vertices of $B$ leaves at least $s-1$ of them. Hence $G-B:=\{e\in E(G):e\cap B=\emptyset\}$ contains a complete bipartite graph $K_{s-1,s-1}$, and therefore a matching of size $s-1$ in ${\mathcal{B}_{s,k}^{*(2)}(\mathcal{F})}$.
Together with $B$, this yields a matching of size $s$ in ${\mathcal{B}_{s,k}^{*}(\mathcal{F})}$, a contradiction with $\nu({\mathcal{B}_{s,k}^{*}(\mathcal{F})})<s$.
\end{proof}
Now we can begin to prove  Theorem \ref{thm:main2}.

\noindent\textbf{Proof of Theorem \ref{thm:main2}:} We proceed by induction on $s$.

When $s=2$, the result holds by Theorem \ref{s=2}. Now assume the statement holds for $s-1$.

\noindent\textbf{Case 1:} $k\geq 3$. Choose $n_0(k,s)$ so large that, whenever $n\geq n_0(k,s)$, we have $n-1\geq n_0(k,s-1)$ and Lemmas~\ref{reduction} and~\ref{cover} are applicable for the parameter $s$. Assume $n\geq n_0(k,s)$.

\noindent\textbf{Subcase 1:} $\mathcal{B}_{s,k}^{*(1)}(\mathcal{F})\neq\emptyset$.

Assume $\{i\}\in \mathcal{B}_{s,k}^{*(1)}(\mathcal{F})$. Consider $\mathcal{G}=\{G\in\binom{[n]\setminus\{i\}}{k}:G\in\mathcal{F}\}$. We have $\nu(\mathcal{G})<s-1$; otherwise there is a matching of size $s-1$ in $\mathcal{G}$. Together with $\{i\}$, we obtain a matching of size $s$ in $\mathcal{B}_{s,k}(\mathcal{F})$. Thus, $\nu(\mathcal{B}_{s,k}^{*}(\mathcal{F}))=\nu(\mathcal{B}_{s,k}(\mathcal{F}))\geq s$, yielding a contradiction to Lemma \ref{mat for 2}.

Now we apply the induction hypothesis to $\mathcal{G}$. We have the $(k+2s-4)$-th largest degree in $\mathcal{G}$ is no more than $\binom{n-2}{k-1}-\binom{n-s}{k-1}$. Note that $|[k+2s-3]\setminus\{i\}|\geq k+2s-4$. Then there exists $i_0\in [1,k+2s-3]\setminus\{i\}$ such that $d_{i_0}(\mathcal{G})\leq \binom{n-2}{k-1}-\binom{n-s}{k-1}$. Hence,
\begin{align*}
    d_{k+2s-2}(\mathcal{F}) &\leq d_{k+2s-3}(\mathcal{F}) \leq d_{i_0}(\mathcal{F})\\
    &\leq d_{i_0}(\mathcal{G}) + \bigl|\{F\in\binom{[n]}{k}: i\in F,\ i_0\in F\}\bigr|\\
    &\leq \left(\binom{n-2}{k-1}-\binom{n-s}{k-1}\right) + \binom{n-2}{k-2}\\
    &= \binom{n-1}{k-1}-\binom{n-s}{k-1}.
\end{align*}

\noindent\textbf{Subcase 2:} $\mathcal{B}_{s,k}^{*(1)}(\mathcal{F})=\emptyset$.

Assume $d_{1}(\mathcal{F})\geq d_{2}(\mathcal{F})\geq\cdots\geq d_{k+2s-2}(\mathcal{F})\geq \binom{n-1}{k-1}-\binom{n-s}{k-1}$; otherwise, our result holds. By Lemma \ref{cover}, $\mathcal{F}$ has a cover of size at most $s-1$; enlarge it if necessary to a cover $S$ with $|S|=s-1$. For any $i\in [n]\setminus S$,
\begin{align*}
    d_i(\mathcal{F}) &\leq \bigl|\{F\in\binom{[n]}{k}: i\in F,\ F\cap S\neq\emptyset\}\bigr|\\
    &= \binom{n-1}{k-1}-\binom{n-s}{k-1}.
\end{align*}
Consequently, at most $s-1$ vertices can have degree exceeding $\binom{n-1}{k-1}-\binom{n-s}{k-1}$. Thus,
$$
d_{k+2s-2}(\mathcal{F})\leq d_s(\mathcal{F})\leq \binom{n-1}{k-1}-\binom{n-s}{k-1}.
$$

\noindent\textbf{Case 2:} $k=2$.

On the contrary, assume $d_1(\mathcal{F})\ge d_2(\mathcal{F})\ge\cdots\ge d_{2s}(\mathcal{F})\ge s$. By Lemma \ref{str1}, $\nu(\mathcal{F})\ge s$, a contradiction. \hfill\qed

\section{Remarks}\label{open}
Recall that Theorem~\ref{thm:main2} improves the index from $2sk$ to $k+2s-2$ at the cost of a weaker condition on $n$: there exists some $n_0$ (depending on $k,s$) such that for all $n\ge n_0$ and for $k\ge 2$, $s\ge 2$,
$
d_{k+2s-2}(\mathcal{F}) \le \binom{n-1}{k-1}-\binom{n-s}{k-1}.
$ The natural questions are whether the lower bound on $n$ can be made smaller or even optimal. And we give the following problem.

\begin{pro}
Let $k\ge 2$, $s\ge 2$ be integers and $\mathcal{F}$ be a $k$-uniform hypergraph on $n$ vertices with $\nu(\mathcal{F})< s$ and degree sequence $d_1(\mathcal{F})\ge d_2(\mathcal{F})\ge\cdots\ge d_n(\mathcal{F})$. Determining a suitable explicit expression of the integer $n_0$ $($expected to be linear in $k)$ such that if $n\geq n_0$,
$$
d_{k+2s-2}(\mathcal{F}) \leq \binom{n-1}{k-1}-\binom{n-s}{k-1}.
$$
\end{pro}

\vskip.2cm
\section*{Acknowledgement}
M. Cao is supported by the National Natural Science Foundation of China (Grant 12301431) and Beijing Natural Science Foundation (Grant 1262010), M. Lu is supported by the National Natural Science Foundation of China (Grant 12571372) and Beijing Natural Science Foundation (Grant 1262010).


\end{document}